\newtheorem{theorem}{Theorem}
\newtheorem{lemma}{Lemma}
\newtheorem{corollary}{Corollary}
\theoremstyle{remark}
\newtheorem{remark}{Remark}
\theoremstyle{definition}
\newtheorem{definition}{Definition}
\DeclareMathOperator{\elm}{elm}
\DeclareMathOperator{\Jac}{Jac}
\DeclareMathOperator{\id}{id}
\DeclareMathOperator{\Ab}{Ab}
\DeclareMathOperator{\dAb}{dAb}
\DeclareMathOperator{\E}{E}
\DeclareMathOperator{\HH}{H}
\DeclareMathOperator{\K}{K}
\DeclareMathOperator{\TT}{T}
\DeclareMathOperator{\WW}{W}
\DeclareMathOperator{\bl}{Bl}
\DeclareMathOperator{\End}{End}
\DeclareMathOperator{\pic}{Pic}
\begin{document}

\title[Tangency and a ruled surface]{Tangency and a ruled surface associated with a Hitchin system}
\author[Taejung Kim]{Taejung Kim}

\address{Korea Institute for Advanced Study\\
207-43 Cheongyangri-dong\\
Seoul 130-722, Korea}

\thanks{The author would like to express his sincere gratefulness to the Korea Institute for Advanced Study for providing him a hospital environment
while preparing this manuscript and to William Goldman, Dongseon Hwang, Bumsig Kim, Hoil Kim, Junmyeong Jang, Byungheup Jun, Serguei Novikov, and Niranjan Ramachandran for giving him their deep insights, lively discussions, and valuable suggestions about this paper.}

\keywords{Elementary transformation, Elliptic soliton, Hitchin system, Linear system,  Residue section, Ruled surface, Tangential cover.}

\subjclass[2010]{14C20, 14H60, 14H70, 14J26,  32L10}
\date{\today}
\email{tjkim@kias.re.kr}

\begin{abstract}
We will generalize the Treibich-Verdier theory about elliptic solitons to a Hitchin system by constructing a particular ruled surface and we will propose a generalization of a tangency condition associated with elliptic solitons to a Hitchin system. In particular, we will calculate the dimension of the moduli space of Hitchin covers satisfying the tangency condition. 
With this new point of view, we will see a subtle relation between the characterizations of coverings and the singularities of divisors in a particular algebraic surface.
\end{abstract}
\maketitle

\section{Introduction and preliminaries}\label{intropre}

The origin of a tangential cover is rooted in the investigation of the solutions of the Kd-V equation. More specifically, it is originated from the reduction theory, i.e., how to reduce a theta function of a given genus to theta functions of lower genera. The theta function is in some sense a muti-dimensional Fourier transform. It is not easy to handle the expression. So, it was a very active area of nineteen century to invent the method of how to reduce it.

It is well known that the Kd-V equation have an explicit theta solution, so-called ``Mateev-Its formula'' by the work of the Russian school. So, the modern reduction theory around the Kd-V theory, more generally non-linear evolution equations, has dealt with how to express the theta formula in terms of an elliptic function, which is the origin of the terminology, \emph{elliptic soliton}. An elliptic soliton is, simply speaking, a solution of the K-P equation, more generally any nonlinear evolution equation, which can be written as elliptic functions. Krichever gave an explicit formula of an elliptic soliton associated with the K-P equation by generalizing the work of Airault, McKean, and Moser about the Kd-V equation. The main ingredients in Krichever's work \cite{kr80} were a Lax representation and a Calogero-Moser system. When the concept of a tangential cover, which was apparent in the work of Krichever in elliptic soliton with hindsight, was first introduced by Treibich and Verdier, this dynamical system point of view essentially becomes the realm of algebro-geometric problems. Hence, it is very obvious, at least to the author, that translating dynamical behaviors to algebrogeometric tools should be an interesting work to do in answering many problems in non-linear evolution equations as well as the other directions.

Hence, in this paper we will generalize the Treibich-Verdier theory about elliptic solitons to a Hitchin integrable system. The main framework is two-fold: The first is to construct a particular ruled surface associated with a Hitchin system \cite{hit87, don96} which generalizes a ruled surface in the Treibich-Verdier theory. In \cite{tv90}, Treibich and Verdier construct a surface $S$ which is a projectivization of rank $2$ bundle $\WW$ over an elliptic curve. It is a universal embedding bundle of a principal affine-bundle $\Delta$ over $X$:
$$0\to\mathbb{G}_a\to\Delta\to X\to 0.$$
In order to extend the Treibich-Verdier theory about an elliptic curve to a general algebraic curve of an arbitrary genus, we need to construct a particular ruled surface whose role substitutes the role of $S$ in \cite{tv90}. We will deal with this matter in Section~\ref{sec;uni}. The second is to generalize the tangency condition associated with elliptic curves in \cite{tv90} to appropriate one in a Hitchin system. In \cite{tv90}, Treibich and Verdier call a pointed morphism $\pi:(\widehat{\mathfrak{R}},p)\to (\mathfrak{R},q)$ a \emph{tangential cover} if $\pi^\ast \circ \dAb(\TT_{q}\mathfrak{R})$ is tangent to $\Ab(\widehat{\mathfrak{R}})$ at $p$ where $\TT_{q}\mathfrak{R}$ is a tangent space. Here $\Ab$ is the Abel map $\Ab:\widehat{\mathfrak{R}}\to\Jac(\widehat{\mathfrak{R}})$ or $\Ab:\mathfrak{R}\to\Jac(\mathfrak{R})$ and $\dAb$ is the associated tangential map. In \cite{tv90}, the authors confine themselves to the case when $\mathfrak{R}$ is an elliptic curve. Of course, it looks tangible that the concept of tangency makes sense regardless of genus of a compact Riemann surface $\mathfrak{R}$. On the other hands, it seems unclear whether or not this can give interesting results when $\mathfrak{R}$ is replaced to a general Riemann surface of genus $>1$. Suppose that $\pi:\widehat{\mathfrak{R}}\to\mathfrak{R}$ is a Hitchin spectral cover \cite{hit87, don96}. This spectral curve is a natural realization of a Higgs bundle $\phi$: A Higgs field is a section of $\End\E\otimes\K_\mathfrak{R}$ where $\E$ is a holomorphic vector bundle of rank $n$ and $\K_\mathfrak{R}$ is a canonical bundle of a compact Riemann surface $\mathfrak{R}$. Then a Hitchin spectral curve is defined by
$$\widehat{\mathfrak{R}}=\{\det(\chi\cdot \id-\phi)=0 \}$$
where $\chi$ is a tautological section of $\pi^{\ast}_{\K_\mathfrak{R}}\K_\mathfrak{R}$ and $\id$ is the identity map in $\End\E$. Schematically we see
$$\xymatrix{\K_\mathfrak{R}\ar[r]^{\pi^{\ast}_{\K_\mathfrak{R}}}\ar[d]^{\pi_{\K_\mathfrak{R}}} &\pi^{\ast}_{\K_\mathfrak{R}}\K_\mathfrak{R}\\
\mathfrak{R} &     \widehat{\mathfrak{R}}\subset \K_\mathfrak{R}\ar[l]^(0.6)\pi\ar[u]_\chi.
}$$
Even though the tangency condition in \cite{tv90} makes sense for a Riemann surface of arbitrary genus, we claim that a trivial straightforward generalization of this concept would not work, since the Treibich and Verdier theory \cite{tv90,trei97} heavily uses the fact that $\dim\HH^0(X,\mathcal{O}_X)=1$ where $X$ is an elliptic curve. Hence, a suitable modification of this concept is necessary to get more interesting theory. Therefore we will modify the concept of the tangency and show why this modification is indeed a right generalization of the tangency for an elliptic curve, which will justify the investigation of this paper, we hope.

The structure of this paper is as follows: In Section~\ref{sec;uni}, we will construct a new ruled surface where Hitchin spectral curves can be defined as divisors. This new surface $\mathfrak{S}$ will take the place of the projectivized cotangent bundle $\mathbb{P}(\K_\mathfrak{R}\oplus\mathbb{C})$ of a compact Riemann surface $\mathfrak{R}$ in \cite{hit87} as well as generalizing the role of $S$ in the elliptic soliton theory. In Section~\ref{seclin} we will characterize the properties of Hitchin divisors in $\mathfrak{S}'=\mathbb{P}(\K_\mathfrak{R}\oplus\mathbb{C})$ and describe a linear system of them. In Section~\ref{sechit}, we will study Hitchin spectral curves which can be realized as divisors in  $\mathfrak{S}$ and indicate a necessary condition for a Hitchin cover to become a divisor in $\mathfrak{S}$. Moreover, we will also describe the moduli space consisting of such Hitchin covers. In Section~\ref{secpol}, we will generalize the tangency condition in the Treibich-Verdier theory to a Hitchin system and we will show that this condition indeed describes the Hitchin divisors in $\mathfrak{S}$. Once we have established the basic necessary frameworks, we will characterize an implication of the tangency condition to the defining equation of a Hitchin spectral curve. Finally, we will announce other investigations in the future using the theory of this paper.

\section{Ruled surface and associated vector bundle}\label{sec;uni}

In this section we will generalize the construction of the ruled surface in Treibich-Verdier theory about elliptic solitons to a Hitchin system. For the backgrounds of basic facts about this section, we refer to \cite{har77,maru70}: An \emph{elementary transformation} of a sheaf $\mathcal{W}$ of vector bundle $\WW$ of rank $2$ over a Riemann surface $\mathfrak{R}$ associated with a surjective morphism $u_q$ for $q\in\mathfrak{R}$ is
$$\xymatrix{
0\ar[r]&\elm_{u_q}(\mathcal{W} )\ar[r]&\mathcal{W}\ar[r]^{u_q}&\mathbb{C}_q\ar[r]&0.}$$
For example, letting $u_q$ be a projection to the first factor of $\mathcal{O}_\mathfrak{R}\oplus\mathcal{O}_\mathfrak{R}$, we have
$$\elm_{u_q}(\mathcal{O}_\mathfrak{R}\oplus\mathcal{O}_\mathfrak{R} )=\mathcal{O}_\mathfrak{R}(-q)\oplus\mathcal{O}_\mathfrak{R}.$$
By projectivization, we see
\begin{equation}\label{eqpro1}
\mathbb{P}\Big(\elm_{u_q}(\mathcal{O}_\mathfrak{R}\oplus\mathcal{O}_\mathfrak{R})\Big)=\mathbb{P}(\mathcal{O}_\mathfrak{R}(-q)\oplus\mathcal{O}_\mathfrak{R})=\mathbb{P}(\mathcal{O}_\mathfrak{R}\oplus\mathcal{O}_\mathfrak{R}(q)).
\end{equation}
In terms of a projectivized bundle, the corresponding process of the above is given by $\elm_p\Big(\mathbb{P}^1\times\mathfrak{R}\Big)$ where $p=([1,0],q)$. Geometrically, this process is the blowing up at $p$ followed by the contraction of the original fiber $\pi^{-1}(q)$ where $\pi:\mathbb{P}^1\times\mathfrak{R}\to\mathfrak{R}$.

M. Atiyah showed that a $\mathbb{P}^1$-bundle over a complete non-singular curve is represented by a vector bundle of rank $2$ with the set of transition functions  $\{G_{ij}(q)=\begin{pmatrix}
a_{ij}(q)&b_{ij}(q)\\
0&c_{ij}(q)\end{pmatrix}\}$ (see \cite{maru70} for details). In this notation, 
we calculate explicitly the set of transition functions of a vector bundle of rank $2$ after certain elementary transformations.

\begin{theorem}\label{ele1}
Let $\{(U_i,z_i)\}$ be an open cover with local coordinates $z_i$ of $\mathfrak{R}$, $p'=(\infty,q_0)=([1,0],q_0)$, and $p=([-1:1],q_0)$.
The transition function $G_{ij}\in\mathbf{PGL}(2,\mathbb{C})$ on $U_i\cap U_j$ of a projective bundle $\elm_{p}\circ\elm_{p'}\Big(\mathbb{P}^1\times\mathfrak{R}\Big)$ is given by
$$G_{ij}(q)=\Big[\begin{pmatrix}
1&(g_{ij}(q)-1)z_i(q)\\
0&1\end{pmatrix}\Big]\in\mathbf{PGL}(2,\mathbb{C})$$
Here $\{g_{ij}(q)=\frac{z_j(q)}{z_i(q)}\}$ is the set of the transition functions of a line bundle $\mathcal{O}_\mathfrak{R}(q_0)$ and
$g_{ij}(q)=1$ for $q\in U_i\cap U_j$ such that $q_0\not\in U_i\cap U_j$.
\end{theorem}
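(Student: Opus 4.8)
The plan is to compute the transition functions directly from the two elementary transformations, tracking how each one modifies the $\mathbf{PGL}(2,\mathbb{C})$-valued cocycle of $\mathbb{P}^1\times\mathfrak{R}$ (whose transition functions are the identity). First I would recall, following Atiyah and Maruyama, that performing an elementary transformation $\elm_p$ on a $\mathbb{P}^1$-bundle amounts, on the level of a rank-$2$ representing bundle $\mathcal{W}$, to replacing $\mathcal{W}$ by $\elm_{u_{q_0}}(\mathcal{W})$ where $u_{q_0}$ is the surjection onto $\mathbb{C}_{q_0}$ determined by the point $p$ in the fiber over $q_0$. Concretely, in an affine chart $U_i \ni q_0$ with coordinate $z_i$, the elementary transformation centered at a point corresponding to the line $\ell \subset \mathbb{C}^2$ is realized by conjugating/multiplying the local frame by a matrix of the form $\mathrm{diag}$-type in a basis adapted to $\ell$, contributing a factor $z_i$ on the quotient line; on charts $U_j$ not containing $q_0$ nothing changes. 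This is exactly the content of the blow-up-then-contract description already given in the text around \eqref{eqpro1}.

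Next I would carry this out twice. Starting from the trivial cocycle, the first transformation $\elm_{p'}$ at $p'=([1:0],q_0)$ is centered at the line spanned by $(1,0)$; in the standard basis it modifies the bundle so that, comparing a frame good near $q_0$ with a frame good away from $q_0$, one picks up the transition $g_{ij}(q)=z_j(q)/z_i(q)$ of $\mathcal{O}_\mathfrak{R}(q_0)$ in the appropriate matrix slot, i.e. the cocycle becomes (a representative of) $\mathrm{diag}(g_{ij},1)$ or its inverse-transpose analogue, with $g_{ij}\equiv 1$ off $q_0$ — this reproduces \eqref{eqpro1}. The second transformation $\elm_p$ at $p=([-1:1],q_0)$ is centered at a different line, spanned by $(-1,1)$; I would change basis by the constant matrix $\begin{pmatrix}1&0\\1&1\end{pmatrix}$ (or its inverse) sending $(-1,1)$ to a coordinate axis, apply the same $z_i$-scaling on the quotient, and change back. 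Conjugating $\mathrm{diag}(g_{ij},1)$ by this unipotent matrix and projectivizing should collapse the diagonal part and leave precisely the upper-triangular unipotent matrix
$$\begin{pmatrix}1&(g_{ij}(q)-1)z_i(q)\\0&1\end{pmatrix},$$
since the off-diagonal entry measures the discrepancy between the two transformations and vanishes exactly when $g_{ij}=1$, i.e. away from $q_0$. I would then note this lies in $\mathbf{PGL}(2,\mathbb{C})$ and verify the cocycle condition on triple overlaps using that $\{g_{ij}\}$ is itself a cocycle.

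The main obstacle I anticipate is bookkeeping: keeping straight which chart carries the "good" frame for each of the two successive elementary transformations, and ensuring that the scaling factor attached to $\elm_{p'}$ is $z_i$ in $U_i$ and $z_j$ in $U_j$ consistently, so that the combination indeed produces $(g_{ij}-1)z_i$ rather than, say, $(g_{ij}-1)z_j$ or a sign variant. Here the hypothesis $p'=([1:0],q_0)$ versus $p=([-1:1],q_0)$ — two distinct points in the same fiber — is what makes the net effect a single unipotent shear rather than a twist by a line bundle; I would double-check that the two centers are distinct (so the composition is well defined as iterated blow-up/contract) and that the order $\elm_p\circ\elm_{p'}$ matters only up to the stated normalization. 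A useful sanity check at the end is to confirm that the resulting surface is again a ruled surface over $\mathfrak{R}$ with invariant matching $\mathbb{P}(\mathcal{O}_\mathfrak{R}\oplus\mathcal{O}_\mathfrak{R})$ up to the expected elementary modifications, and that specializing to the case where $q_0$ lies in no double overlap recovers $\mathbb{P}^1\times\mathfrak{R}$ locally, consistent with $g_{ij}\equiv 1$ there.
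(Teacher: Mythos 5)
Your overall route is the same as the paper's: realize each elementary transformation by an explicit local modification of the frame near $q_0$ and read off the resulting $\mathbf{PGL}(2,\mathbb{C})$-cocycle, handling the center $[-1:1]$ by an adapted basis. The gap is in the decisive step. You assert that ``conjugating $\mathrm{diag}(g_{ij},1)$ by this unipotent matrix and projectivizing should collapse the diagonal part.'' It cannot: for a constant matrix $M$, the conjugate $M^{-1}\mathrm{diag}(g_{ij},1)M$ still has eigenvalues $g_{ij}$ and $1$, so on overlaps where $g_{ij}\neq 1$ it is not a scalar multiple of any unipotent matrix and in particular never represents the class of $\begin{pmatrix}1&(g_{ij}-1)z_i\\0&1\end{pmatrix}$ in $\mathbf{PGL}(2,\mathbb{C})$; a global constant change of frame merely re-presents $\mathbb{P}\bigl(\mathcal{O}_\mathfrak{R}\oplus\mathcal{O}_\mathfrak{R}(q_0)\bigr)$, whereas the theorem says the second transformation produces $\mathbb{P}(\WW)$ with projectively unipotent cocycle (which is exactly what lets the paper later view $\WW$ as a self-extension of $\mathcal{O}_\mathfrak{R}$). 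What actually produces the unipotent matrix is the $z$-dependence of the second modification and its interaction with the first: the cocycle must be multiplied on the $U_i$-side and on the $U_j$-side by the $z_i$-, respectively $z_j$-dependent local modification matrices (identity on charts missing $q_0$), and only after this gluing and after dividing out a common scalar does the entry $(g_{ij}-1)z_i=z_j-z_i$ appear. Your proposal never performs this computation, and the bookkeeping you yourself flag (in which frame the center $[-1:1]$ is taken, and how the $z_i$, $z_j$ factors enter) is precisely what is left unresolved; different resolutions of it give visibly different composites, so it cannot be waved through.

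For comparison, the paper settles all of this by one explicit calculation: the transformation at $[1:0]$ contributes the local matrix $\begin{pmatrix}1/z&0\\0&1\end{pmatrix}$, the one at $[-1:1]$ contributes $\begin{pmatrix}1&1\\0&1/z\end{pmatrix}$ (from $ae_1+be_2\mapsto(a+b)e_1'+\tfrac{b}{z}e_2'$, i.e.\ the adapted frame $\{e_1,\,z(e_2-e_1)\}$), their composite is $C(z)=\begin{pmatrix}1/z&1\\0&1/z\end{pmatrix}$, and then $G_{ij}=C(z_j)\begin{pmatrix}z_i&-z_i^2\\0&z_i\end{pmatrix}=\begin{pmatrix}z_i/z_j&\tfrac{z_i}{z_j}(z_j-z_i)\\0&z_i/z_j\end{pmatrix}$, which in $\mathbf{PGL}(2,\mathbb{C})$ is the stated unipotent matrix since $z_j-z_i=(g_{ij}-1)z_i$. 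To repair your argument you would have to carry out this same $z$-dependent computation, with your conjugated scaling $M^{-1}\mathrm{diag}(1/z,1)M$ playing the role of the second local factor and with the frame conventions fixed once and for all; as written, the proposal asserts the conclusion exactly at the point where the actual work of the theorem occurs.
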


\begin{proof}
  Consider a trivial bundle $\mathbb{C}^2\times\mathfrak{R}$ of rank $2$ and its projectivization $\mathbb{P}^1\times\mathfrak{R}$. By an elementary transformation (see \cite{maru70}) at $p'=(\infty,q_0)=([1,0],q_0)$, a basis $\{e_1,e_2\}$ of global sections in $\HH^0(\mathfrak{R},\mathcal{O}_\mathfrak{R}\oplus\mathcal{O}_\mathfrak{R})$ is transformed to $\{ze_1,e_2\}=\{e'_1,e'_2\}$ around $q_0$ with a local coordinate $z$, i.e., $ae_1+be_2\mapsto\frac{a}{z}e'_1+be'_2$.
An equivalent procedure up to projectivization in the spirit of Equation~\eqref{eqpro1} is to add zeros at $q_0$, i.e.,
$$ae_1+be_2\mapsto ae'_1+zbe'_2.$$
Consequently, the defining transformation at $q_0$ of $\elm_{p'}\Big(\mathbb{P}^1\times\mathfrak{R}\Big)$ is given by $\begin{pmatrix}\frac{1}{z}&0\\0&1\end{pmatrix}$. On the other hands, by an elementary transformation at $p=([-1,1],q_0)$, we have
$$ae_1+be_2\mapsto (a+b)e'_1+\frac{b}{z}e'_2.$$
That is, a basis $\{e_1,e_2\}$ of global sections in $\HH^0(\mathfrak{R},\mathcal{O}_\mathfrak{R}\oplus\mathcal{O}_\mathfrak{R})$ is transformed to $\{e_1,z(e_2-e_1)\}=\{e'_1,e'_2\}$ around $q_0$ with a local coordinate $z$. Consequently, the transition function at $q_0$ of $\elm_{p}\Big(\mathbb{P}^1\times\mathfrak{R}\Big)$ is given by $\begin{pmatrix}1&1\\0&\frac{1}{z}\end{pmatrix}$. Combining those transformations,  the defining transformation at $q_0$ of $\elm_p\circ\elm_{p'}\Big(\mathbb{P}^1\times\mathfrak{R}\Big)$ is given by $\begin{pmatrix}\frac{1}{z}&1\\0&\frac{1}{z}\end{pmatrix}$. Then the transition function $G_{ij}$ on $U_i\cap U_j$ is given by
$$\begin{pmatrix}\frac{1}{z_j}&1\\0&\frac{1}{z_j}\end{pmatrix}\begin{pmatrix}z_i&-z_{i}^{2}\\0&z_i\end{pmatrix}=\begin{pmatrix}\frac{z_i}{z_j}&\frac{z_i}{z_j}(z_j-z_i)\\0&\frac{z_i}{z_j}\end{pmatrix}.$$
Since $g_{ij}=\frac{z_j}{z_i}$ is a transition function of $\mathcal{O}_\mathfrak{R}(q_0)$, as a projective transformation it is the same as
$$G_{ij}(q)=\begin{pmatrix}1&(g_{ij}(q)-1)z_i(q)\\0&1\end{pmatrix}.$$
\end{proof}

Let $K_\mathfrak{R}=\sum_{i=1}^{2g-2}q_i$ be a canonical divisor of $\mathfrak{R}$. In particular, let us assume all the $q_i$ are distinct throughout the paper unless otherwise specified. Let $p'_i=(\infty,q_i)$ for $i=1,\dots,2g-2$ and $p_i=([-1,1],q_i)$ for $i=1,\dots,2g-2$. Note that we denote $\infty=[1,0]$. By Theorem~\ref{ele1}, we have the following.

\begin{corollary}\label{elco1}
Let $\{(U_i,z_i)\}$ be an open cover with local coordinates $z_i$ of $\mathfrak{R}$. The transition function $G_{ij}\in\mathbf{PGL}(2,\mathbb{C})$ on $U_i\cap U_j$ of a projective bundle

$$\elm_{p_1}\circ\dots\circ \elm_{p_{2g-2}}\circ\elm_{p'_1}\circ\dots\circ \elm_{p'_{2g-2}} \Big(\mathbb{P}^1\times\mathfrak{R}\Big)$$
is given by
$$G_{ij}(q)=\Big[\begin{pmatrix}
1&(g_{ij}(q)-1)z_i(q)\\
0&1\end{pmatrix}\Big]\in\mathbf{PGL}(2,\mathbb{C})$$
Here $\{g_{ij}(q)=\frac{z_j(q)}{z_i(q)}\}$ is the set of the transition functions of a line bundle $\mathcal{O}_\mathfrak{R}(K_\mathfrak{R})$.
\end{corollary}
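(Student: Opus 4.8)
The plan is to iterate Theorem~\ref{ele1} and track how the transition functions compose under successive elementary transformations. Since each $q_i$ is distinct, the local picture near any point $q \in \mathfrak{R}$ only ever involves at most one of the centers $p'_i, p_i$; on an open set $U_i$ containing none of the $q_i$ the bundle is trivialized with the identity transition data, and on an open set $U_i$ containing exactly one $q_\ell$ (we may refine the cover so this always holds), only the pair $\elm_{p_\ell}\circ\elm_{p'_\ell}$ acts nontrivially. Thus the key observation is that the various elementary transformations have \emph{disjoint support}, so they commute and the cumulative effect is computed one point at a time exactly as in Theorem~\ref{ele1}.

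Concretely, I would proceed as follows. First, refine $\{(U_i,z_i)\}$ so that each $U_i$ meets the canonical divisor $K_\mathfrak{R}=\sum q_\ell$ in at most one point, and set up the trivial bundle $\mathbb{C}^2\times\mathfrak{R}$ with basis $\{e_1,e_2\}$. Second, apply the computation in the proof of Theorem~\ref{ele1} locally near each $q_\ell$: the transition function of $\elm_{p_\ell}\circ\elm_{p'_\ell}(\mathbb{P}^1\times\mathfrak{R})$ at $q_\ell$ is $\left[\begin{pmatrix} 1/z & 1 \\ 0 & 1/z\end{pmatrix}\right]$ in a local coordinate $z$ centered at $q_\ell$, and is the identity away from $q_\ell$. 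Third, since the supports are disjoint, the composite $\elm_{p_1}\circ\cdots\circ\elm_{p_{2g-2}}\circ\elm_{p'_1}\circ\cdots\circ\elm_{p'_{2g-2}}$ applied to $\mathbb{P}^1\times\mathfrak{R}$ yields, on $U_i\cap U_j$, exactly the product of the two Theorem~\ref{ele1}-type matrices, giving
$$G_{ij}(q)=\begin{pmatrix}\dfrac{z_i}{z_j}&\dfrac{z_i}{z_j}(z_j-z_i)\\[1ex]0&\dfrac{z_i}{z_j}\end{pmatrix}\quad\text{when }q_\ell\in U_i\cap U_j,$$
and $G_{ij}=\mathrm{id}$ otherwise, where in the first case $z_i,z_j$ are the local coordinates scaled so that $z_i/z_j$ represents the transition function $g_{ij}$ of the line bundle $\mathcal{O}_\mathfrak{R}(q_\ell)$ near $q_\ell$. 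Fourth, pass to $\mathbf{PGL}(2,\mathbb{C})$: dividing by $z_i/z_j$ turns this into $\begin{pmatrix}1&z_j-z_i\\0&1\end{pmatrix}$, and rewriting $z_j-z_i = (g_{ij}-1)z_i$ gives the stated form. Finally, observe that the product over all $\ell$ of the line bundles $\mathcal{O}_\mathfrak{R}(q_\ell)$ is $\mathcal{O}_\mathfrak{R}(K_\mathfrak{R})$, so the cocycle $\{g_{ij}\}$ assembled from these local pieces is precisely a set of transition functions for $\mathcal{O}_\mathfrak{R}(K_\mathfrak{R})$; and the formula $G_{ij}=\left[\begin{pmatrix}1&(g_{ij}-1)z_i\\0&1\end{pmatrix}\right]$ holds uniformly, since $g_{ij}=1$ (hence $G_{ij}=\mathrm{id}$) on overlaps disjoint from the canonical divisor.

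The main obstacle is not any single calculation but the bookkeeping needed to justify that the transformations genuinely commute and can be localized: one must be careful that refining the cover does not change the isomorphism class of the resulting ruled surface, that the "scaling" of local coordinates used to identify $z_i/z_j$ with a genuine cocycle representative of $\mathcal{O}_\mathfrak{R}(q_\ell)$ is consistent across the overlaps, and — most delicately — that the cocycle condition $G_{ij}G_{jk}=G_{ik}$ is satisfied by the assembled upper-triangular matrices, which ultimately reduces to the cocycle condition for $\{g_{ij}\}$ on $\mathcal{O}_\mathfrak{R}(K_\mathfrak{R})$ together with the additivity of the coordinate differences. Once the disjoint-support principle is stated cleanly, the rest is a direct application of Theorem~\ref{ele1} one point at a time.
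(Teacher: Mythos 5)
Your proposal is correct and follows essentially the same route as the paper, which proves the corollary simply by invoking Theorem~\ref{ele1} at each of the $2g-2$ distinct points $q_\ell$ of $K_\mathfrak{R}$; your localization argument (disjoint supports, refine the cover, assemble the local cocycles $z_j/z_i$ into transition functions of $\mathcal{O}_\mathfrak{R}(K_\mathfrak{R})$) is exactly the intended content behind that citation. Only note the small slip where you say ``$z_i/z_j$ represents the transition function $g_{ij}$'': consistently with your own step four and with the paper, $g_{ij}=z_j/z_i$.
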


We will denote
$$\mathfrak{S}:=\elm_{p_1}\circ\dots\circ \elm_{p_{2g-2}}\circ\elm_{p'_1}\circ\dots\circ \elm_{p'_{2g-2}} \Big(\mathbb{P}^1\times\mathfrak{R}\Big).$$
From Corollary~\ref{elco1}, we deduce that $\mathfrak{S}$ is a projectivization of a vector bundle $\WW$ of rank $2$ with a set $\Big\{\begin{pmatrix}
1&(g_{ij}(q)-1)z_i(q)\\
0&1\end{pmatrix}\Big\}$ of transition functions. In particular, from the explicit expression of the transition functions we can see that $\bigwedge^2\WW$ is trivial and has a trivial sub-bundle. Hence, it defines an extension class $[\alpha]\in\HH^1(\mathfrak{R},\mathcal{O}_\mathfrak{R})$:
$$[\alpha]:0\to\mathcal{O}_\mathfrak{R}\to\WW\to\mathcal{O}_\mathfrak{R}\to0.$$

\begin{remark}
This construction is the generalization of the construction of a surface $S$ in \cite{tv90} which is a projectivization of rank $2$ bundle $\WW$ over an elliptic curve.
The bundle $\WW$ is an universal embedding bundle of a principal affine-bundle $\Delta$ over $X$:
$$0\to\mathbb{G}_a\to\Delta\to X\to 0.$$
\end{remark}

Let $C_0$ be a section of $\pi_S:\mathfrak{S}\to \mathfrak{R}$ corresponding to a trivial sub-bundle. It is obvious that $\HH^0(\mathfrak{R},\mathcal{W})\ne0$ where $\mathcal{W}$ is the sheaf of $\WW$. Moreover, $\HH^0(\mathfrak{R},\mathcal{W}\otimes\mathcal{L})=0$ for any line bundle $\mathcal{L}$ with negative degree from the induced long exact sequence of the following short exact sequence and the fact $\HH^0(\mathfrak{R},\mathcal{L})=0$:
$$0\to\mathcal{L}\to\mathcal{W}\otimes\mathcal{L}\to\mathcal{L}\to0.$$
That is, the sheaf $\mathcal{W}$ is \emph{normalized} (see \emph{p.373} in \cite{har77}). Consequently,
$$C_0.C_0=qf.qf=0\text{ and }C_0.qf=1\text{ where }$$
$qf$ is a divisor which is the fiber $\pi^{-1}_{\mathfrak{S}}(q)$ of $\pi_\mathfrak{S}:\mathfrak{S}\to\mathfrak{R}$. Moreover, from \emph{p.373} in \cite{har77} we see that the canonical divisor is given by
$$K_\mathfrak{S}\sim -2C_0+K_\mathfrak{R}f$$
where $K_\mathfrak{R}$ is a canonical divisor of $\mathfrak{R}$.

\section{Hitchin covers in a linear system in $\mathfrak{S}'=\mathbb{P}(\K_\mathfrak{R}\oplus\mathbb{C})$.}\label{seclin}
A Hitchin spectral curve $\pi:\widehat{\mathfrak{R}}\to \mathfrak{R}$ of degree $n$ over a compact Riemann surface $\mathfrak{R}$ of genus $g$ is defined by a zero divisor of a section $s$ of a line bundle $\pi^{\ast}_{\K_\mathfrak{R}}\K_{\mathfrak{R}}^{n}$ over a non-compact complex surface $\K_\mathfrak{R}$ where $\pi_{\K_\mathfrak{R}}:\K_\mathfrak{R}\to\mathfrak{R}$. Since the line bundle $\pi^{\ast}_{\K_\mathfrak{R}}\K_{\mathfrak{R}}^{n}$ has a section $\chi^n$ where $\chi$ is a tautological section of  $\pi^{\ast}_{\K_\mathfrak{R}}\K_{\mathfrak{R}}$ over $\K_\mathfrak{R}$, we deduce that a Hitchin spectral curve $\widehat{\mathfrak{R}}$ is linearly equivalent to the zero divisor $n\mathfrak{R}$ of $\chi^n$. On the other hands, since $s$ is a section of $\pi':\pi^{\ast}_{\K_\mathfrak{R}}\K_{\mathfrak{R}}^{n}\to\K_\mathfrak{R}$, we also infer that a Hitchin spectral curve $\widehat{\mathfrak{R}}$ is linearly equivalent to a divisor $\pi'^{-1}(nK_\mathfrak{R})$ where $K_\mathfrak{R}$ is a canonical divisor of $\mathfrak{R}$. In particular, from the adjunction formula and the triviality of canonical bundle $\K_{\K_\mathfrak{R}}$ of the non-compact space $\K_\mathfrak{R}$, we have
$$g(\widehat{\mathfrak{R}})=n^2(g-1)+1\text{ and }\dim|n\mathfrak{R}|=n^2(g-1)+1.$$
See \cite{hit87} for details. By projectivizing the canonical bundle, i.e.,  $\mathbb{P}(\K_\mathfrak{R}\oplus\mathbb{C})$, we still conclude that a Hitchin spectral curve $\widehat{\mathfrak{R}}$ in $\K_\mathfrak{R}$ naturally sits in $\mathfrak{S}'=\mathbb{P}(\K_\mathfrak{R}\oplus\mathbb{C})$. However, it is easy to see that $\widehat{\mathfrak{R}}$ is not linearly equivalent to  a divisor $nK_{\mathfrak{R}}f'$ where $qf'=\pi_{\mathfrak{S}'}^{-1}(q)$ and $\pi_{\mathfrak{S}'}:\mathfrak{S}'\to\mathfrak{R}$, since there is no tautological section on $\mathfrak{S}'$. Instead, it is not difficult to see that  $\widehat{\mathfrak{R}}$ is linearly equivalent to $nC'_0+nK_{\mathfrak{R}}f'$ where $C'_0$ is the section corresponding to a surjection $\mathcal{K}_\mathfrak{R}\oplus\mathcal{O}_\mathfrak{R}\to\mathcal{O}_\mathfrak{R}$ where $\mathcal{K}_\mathfrak{R}$ is the sheaf of a canonical bundle $\K_\mathfrak{R}$.  Let us calculate the dimension of $|nC'_0+nK_\mathfrak{R}f'|$. In order to do that, we need the following Lemma.

\begin{lemma}\label{lemma1}
For integers $i\geq0$ and $n_1,n_2>0$, we have
\begin{equation}\label{indu1}
\HH^i(\mathcal{O}_{\mathfrak{S}'}(n_1C'_0+n_2K_\mathfrak{R}f'))=\sum_{m=0}^{n_1}\HH^i(\mathcal{O}_{\mathfrak{R}}(m\mathfrak{e}+n_2K_\mathfrak{R})).
\end{equation}
Here $\mathfrak{e}=\bigwedge^2\mathcal{E}_0$ where $\mathfrak{S}'=\mathbb{P}(\mathcal{E}_0)$ and $\mathcal{E}_0$ is normalized.
\end{lemma}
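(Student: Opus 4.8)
The plan is to push the computation down to the base curve $\mathfrak{R}$ along $\pi:=\pi_{\mathfrak{S}'}\colon\mathfrak{S}'\to\mathfrak{R}$ and then to exploit that the normalized bundle $\mathcal{E}_0$ is a direct sum of line bundles. First I would fix the bundle and the relevant divisor classes. Since $\mathfrak{S}'=\mathbb{P}(\K_\mathfrak{R}\oplus\mathbb{C})=\mathbb{P}(\mathcal{K}_\mathfrak{R}\oplus\mathcal{O}_\mathfrak{R})$ and $\deg\K_\mathfrak{R}=2g-2\geq 0$, tensoring $\mathcal{K}_\mathfrak{R}\oplus\mathcal{O}_\mathfrak{R}$ by $\mathcal{O}_\mathfrak{R}(-K_\mathfrak{R})$ yields a normalized representative $\mathcal{E}_0=\mathcal{O}_\mathfrak{R}\oplus\mathcal{O}_\mathfrak{R}(-K_\mathfrak{R})$ of the same $\mathbb{P}^1$-bundle; in particular $\mathcal{E}_0$ is a direct sum of line bundles and $\mathfrak{e}=\bigwedge^2\mathcal{E}_0\cong\mathcal{O}_\mathfrak{R}(-K_\mathfrak{R})$, i.e. $\mathfrak{e}\sim -K_\mathfrak{R}$ as divisor classes. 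With respect to this presentation the section $C'_0$ attached to the surjection $\mathcal{K}_\mathfrak{R}\oplus\mathcal{O}_\mathfrak{R}\to\mathcal{O}_\mathfrak{R}$ corresponds to the quotient $\mathcal{E}_0\to\mathfrak{e}$, hence is the normalized section, so $\mathcal{O}_{\mathfrak{S}'}(C'_0)\cong\mathcal{O}_{\mathbb{P}(\mathcal{E}_0)}(1)$ (see \emph{p.373} in \cite{har77}), while $\mathcal{O}_{\mathfrak{S}'}(f')\cong\pi^{\ast}\mathcal{O}_\mathfrak{R}(q)$ for a point $q\in\mathfrak{R}$. Therefore
$$\mathcal{O}_{\mathfrak{S}'}(n_1C'_0+n_2K_\mathfrak{R}f')\;\cong\;\mathcal{O}_{\mathbb{P}(\mathcal{E}_0)}(n_1)\otimes\pi^{\ast}\mathcal{O}_\mathfrak{R}(n_2K_\mathfrak{R}).$$

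Next I would compute the direct images along $\pi$. Since $\pi$ is a $\mathbb{P}^1$-bundle and $n_1\geq 1$ (so the fibrewise degree is $\geq -1$), one has $R^j\pi_{\ast}\mathcal{O}_{\mathbb{P}(\mathcal{E}_0)}(n_1)=0$ for all $j\geq 1$ and $\pi_{\ast}\mathcal{O}_{\mathbb{P}(\mathcal{E}_0)}(n_1)=\mathrm{Sym}^{n_1}\mathcal{E}_0$. Twisting by $\pi^{\ast}\mathcal{O}_\mathfrak{R}(n_2K_\mathfrak{R})$ and invoking the projection formula, the higher direct images still vanish and
$$\pi_{\ast}\,\mathcal{O}_{\mathfrak{S}'}(n_1C'_0+n_2K_\mathfrak{R}f')\;=\;\mathrm{Sym}^{n_1}\mathcal{E}_0\otimes\mathcal{O}_\mathfrak{R}(n_2K_\mathfrak{R}).$$
Because $\mathcal{E}_0=\mathcal{O}_\mathfrak{R}\oplus\mathcal{O}_\mathfrak{R}(-K_\mathfrak{R})$ splits, $\mathrm{Sym}^{n_1}\mathcal{E}_0=\bigoplus_{m=0}^{n_1}\mathcal{O}_\mathfrak{R}(-mK_\mathfrak{R})=\bigoplus_{m=0}^{n_1}\mathcal{O}_\mathfrak{R}(m\mathfrak{e})$, and hence
$$\pi_{\ast}\,\mathcal{O}_{\mathfrak{S}'}(n_1C'_0+n_2K_\mathfrak{R}f')\;=\;\bigoplus_{m=0}^{n_1}\mathcal{O}_\mathfrak{R}(m\mathfrak{e}+n_2K_\mathfrak{R}).$$

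Finally, as all higher direct images along $\pi$ vanish, the Leray spectral sequence for $\pi$ collapses, giving isomorphisms $\HH^i(\mathfrak{S}',\mathcal{F})\cong\HH^i(\mathfrak{R},\pi_{\ast}\mathcal{F})$ for every $i$, with $\mathcal{F}=\mathcal{O}_{\mathfrak{S}'}(n_1C'_0+n_2K_\mathfrak{R}f')$; combining this with the last display and the additivity of cohomology over finite direct sums gives \eqref{indu1}. Alternatively one can induct on $n_1$ using the restriction sequence $0\to\mathcal{O}_{\mathfrak{S}'}((n_1-1)C'_0+n_2K_\mathfrak{R}f')\to\mathcal{O}_{\mathfrak{S}'}(n_1C'_0+n_2K_\mathfrak{R}f')\to\mathcal{O}_{\mathfrak{R}}(n_1\mathfrak{e}+n_2K_\mathfrak{R})\to0$, obtained by restricting to $C'_0\cong\mathfrak{R}$ and using $C'_0\cdot C'_0\sim\mathfrak{e}$, $f'\cdot C'_0\sim\mathrm{pt}$; but then one must check that the connecting homomorphisms vanish, which again reduces to the splitting of $\mathcal{E}_0$, so the pushforward route is cleaner. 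The computation is essentially formal; the only genuinely delicate point, I expect, is the convention bookkeeping of the first paragraph — Grothendieck's $\mathbb{P}(\mathcal{E})=\mathrm{Proj}\,\mathrm{Sym}\,\mathcal{E}$, the identification of the normalized section $C'_0$ with the class of $\mathcal{O}_{\mathbb{P}(\mathcal{E}_0)}(1)$, and the resulting $\mathfrak{e}\sim -K_\mathfrak{R}$ — after which the rest follows mechanically.
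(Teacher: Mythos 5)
Your argument is correct, and it takes a genuinely different route from the paper's. You compute the full pushforward in one stroke: identify $C'_0$ with the normalized section so that $\mathcal{O}_{\mathfrak{S}'}(C'_0)\cong\mathcal{O}_{\mathbb{P}(\mathcal{E}_0)}(1)$, use $\pi_{\mathfrak{S}'\ast}\mathcal{O}_{\mathbb{P}(\mathcal{E}_0)}(n_1)=\mathrm{Sym}^{n_1}\mathcal{E}_0$ together with the projection formula, the splitting $\mathrm{Sym}^{n_1}\big(\mathcal{O}_\mathfrak{R}\oplus\mathcal{O}_\mathfrak{R}(-K_\mathfrak{R})\big)=\bigoplus_{m=0}^{n_1}\mathcal{O}_\mathfrak{R}(m\mathfrak{e})$, and the vanishing of the higher direct images (fibre degree $n_1\geq1$), so that the degenerate Leray spectral sequence yields \eqref{indu1} for every $i$ simultaneously. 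The paper uses the pushforward identity only for the base case $n_1=1$, namely $\pi_{\mathfrak{S}'\ast}\mathcal{O}_{\mathfrak{S}'}(C'_0+n_2K_\mathfrak{R}f')=\mathcal{E}_0\otimes n_2\mathcal{K}_\mathfrak{R}$, and then inducts on $n_1$ via the sequence $0\to\mathcal{O}_{\mathfrak{S}'}(D_{-1})\to\mathcal{O}_{\mathfrak{S}'}(D)\to\mathcal{O}_{C'_0+K_\mathfrak{R}f'}(D)\to0$ with $D_{-1}=(n_1-1)C'_0+(n_2-1)K_\mathfrak{R}f'$ (so both coefficients drop), and must then argue that the connecting maps $\beta_0,\beta_1$ vanish, using surjectivity of the restriction map on $\HH^0$ and the inductively known vanishing of $\HH^2$. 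Your route buys uniformity in $i$ and avoids that vanishing-of-connecting-maps step entirely, at the price of the convention bookkeeping you flag (Grothendieck projectivization, $C'_0$ as the class of $\mathcal{O}(1)$, and $\mathfrak{e}\sim-K_\mathfrak{R}$), which you handle correctly; in both arguments the real content is the splitting of $\mathcal{E}_0$. The alternative induction you sketch, subtracting only $C'_0$ and restricting to $C'_0\cong\mathfrak{R}$, is also sound and is in fact a slightly cleaner variant of the paper's inductive step, since it keeps $n_2$ fixed throughout.
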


\begin{proof} 
Let $\mathfrak{S}'=\mathbb{P}(\mathcal{E}_0)$ where $\mathcal{E}_0=\mathcal{O}_\mathfrak{R}\oplus(-\mathcal{K}_\mathfrak{R})$, $\pi_{\mathfrak{S}'}:\mathfrak{S}'\to\mathfrak{R}$, and $\mathcal{K}_\mathfrak{R}$ is the sheaf of a canonical bundle $\K_\mathfrak{R}$. . Note that $\mathbb{P}(\mathcal{K}_\mathfrak{R}\oplus\mathcal{O}_\mathfrak{R})\cong\mathbb{P}(\mathcal{O}_\mathfrak{R}\oplus(-\mathcal{K}_\mathfrak{R}))$. Since $\mathcal{O}_\mathfrak{R}\oplus(-\mathcal{K}_\mathfrak{R})$ is normalized (see \emph{p.374} in \cite{har77}), the divisor of $\bigwedge^2\mathcal{E}_0$ is $-K_\mathfrak{R}:=\mathfrak{e}$. We will prove the lemma by proceeding an induction on $n_1$. From \emph{p.371} in \cite{har77}, we know that 
$$\HH^i(\mathcal{O}_{\mathfrak{S}'}(C'_0+n_2K_\mathfrak{R}f'))=\HH^i(\mathfrak{R},\pi_{\mathfrak{S}'\ast}(\mathcal{O}_{\mathfrak{S}'}(C'_0+n_2K_\mathfrak{R}f'))=\HH^i(\mathfrak{R},\mathcal{E}_0\otimes (n_2\mathcal{K}_\mathfrak{R})).$$
Consequently,
$$\HH^i(\mathcal{O}_{\mathfrak{S}'}(C'_0+n_2K_\mathfrak{R}f'))=\HH^i(\mathcal{O}_\mathfrak{R}(n_2K_\mathfrak{R}))\oplus\HH^i(\mathfrak{R},\mathcal{O}_\mathfrak{R}((n_2-1)K_\mathfrak{R})).$$
Hence, we have proved assertion~\eqref{indu1} for $n_1=1$. Let
$$D=n_1C'_0+n_2K_\mathfrak{R}f'\text{ and }D_{-1}=(n_1-1)C'_0+(n_2-1)K_\mathfrak{R}f'.$$
Now consider a short exact sequence:
$$0\to\mathcal{O}_{\mathfrak{S}'}(D_{-1})\to\mathcal{O}_{\mathfrak{S}'}(D)\to\mathcal{O}_{C'_0+K_\mathfrak{R}f'}(D)\to0.$$
Hence, we have the induced long exact sequence:
$$\xymatrix{0\ar[r]&\HH^0(\mathcal{O}_{\mathfrak{S}'}(D_{-1}))\ar[r]&\HH^0(\mathcal{O}_{\mathfrak{S}'}(D))\ar[r]^{\alpha}&\HH^0(\mathfrak{R},n_2\mathcal{K}_\mathfrak{R})\ar[dll]_{\beta_0}&\\
&\HH^1(\mathcal{O}_{\mathfrak{S}'}(D_{-1}))\ar[r]&\HH^1(\mathcal{O}_{\mathfrak{S}'}(D))\ar[r]&\HH^1(\mathfrak{R},n_2\mathcal{K}_\mathfrak{R})\ar[dll]_{\beta_1}&\\
&\HH^2(\mathcal{O}_{\mathfrak{S}'}(D_{-1}))\ar[r]&\HH^2(\mathcal{O}_{\mathfrak{S}'}(D))\ar[r]&0.&}$$
Since $\alpha$ is always surjective, $\beta_0$ is a zero map. By the induction hypothesis, we have 
$$\HH^2(\mathcal{O}_{\mathfrak{S}'}(D_{-1}))=\sum_{m=0}^{n_1-1}\HH^2(\mathcal{O}_{\mathfrak{R}}(m\mathfrak{e}+(n_2-1)K_\mathfrak{R})).$$
Clearly, the right-hand side is zero. Hence, $\beta_1$ is a zero map. Consequently, we have
\begin{equation}\label{indu2}
\HH^i(\mathcal{O}_{\mathfrak{S}'}(D))=\HH^i(\mathcal{O}_{\mathfrak{S}'}(D_{-1}))\oplus\HH^i(\mathfrak{R},n_2\mathcal{K}_\mathfrak{R})\text{ for }i\geq0.
\end{equation}
Again, by the induction hypothesis, we have
$$\HH^i(\mathcal{O}_{\mathfrak{S}'}(D_{-1}))=\sum_{m=0}^{n_1-1}\HH^i(\mathcal{O}_{\mathfrak{R}}(m\mathfrak{e}+(n_2-1)K_\mathfrak{R})).$$
Since $\mathfrak{e}=-K_\mathfrak{R}$, we see that
$$\begin{aligned}
\sum_{m=0}^{n_1-1}\HH^i(\mathcal{O}_{\mathfrak{R}}(m\mathfrak{e}+(n_2-1)K_\mathfrak{R}))&=\sum_{m=0}^{n_1-1}\HH^1(\mathcal{O}_{\mathfrak{R}}(m+1)\mathfrak{e}+n_2K_\mathfrak{R}))\\
&=\sum_{m=1}^{n_1}\HH^i(\mathcal{O}_{\mathfrak{R}}(m\mathfrak{e}+n_2K_\mathfrak{R})).
\end{aligned}$$
Combing this with \eqref{indu2}, we have
$$\HH^i(\mathcal{O}_{\mathfrak{S}'}(D))=\sum_{m=0}^{n_1}\HH^i(\mathcal{O}_{\mathfrak{R}}(m\mathfrak{e}+n_2K_\mathfrak{R})).$$
\end{proof}

\begin{corollary}\label{codim1}
$$\begin{aligned}
\dim\HH^1(\mathcal{O}_{\mathfrak{S}'}(nC'_0+nK_\mathfrak{R}f'))&=g+1\\
\dim\HH^2(\mathcal{O}_{\mathfrak{S}'}(nC'_0+nK_\mathfrak{R}f'))&=0.
\end{aligned}$$
\end{corollary}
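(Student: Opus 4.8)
The plan is to apply Lemma~\ref{lemma1} with $n_1 = n_2 = n$ and then evaluate each summand on the right-hand side using standard facts about line bundles on the curve $\mathfrak{R}$. Since $\mathfrak{e} = -K_\mathfrak{R}$, the summand $\mathcal{O}_\mathfrak{R}(m\mathfrak{e} + nK_\mathfrak{R})$ equals $\mathcal{O}_\mathfrak{R}((n-m)K_\mathfrak{R})$, so Lemma~\ref{lemma1} gives
$$\HH^i(\mathcal{O}_{\mathfrak{S}'}(nC'_0 + nK_\mathfrak{R}f')) = \sum_{m=0}^{n}\HH^i(\mathcal{O}_\mathfrak{R}((n-m)K_\mathfrak{R})) = \sum_{k=0}^{n}\HH^i(\mathcal{O}_\mathfrak{R}(kK_\mathfrak{R})).$$
Thus the whole computation reduces to understanding $\HH^i(\mathcal{O}_\mathfrak{R}(kK_\mathfrak{R}))$ for $0 \le k \le n$ and $i = 1, 2$.

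For $i = 2$ the answer is immediate: $\mathfrak{R}$ is a curve, so $\HH^2$ of any sheaf on $\mathfrak{R}$ vanishes, and hence $\dim\HH^2(\mathcal{O}_{\mathfrak{S}'}(nC'_0 + nK_\mathfrak{R}f')) = 0$. For $i = 1$, I would split off the term $k = 0$: $\HH^1(\mathcal{O}_\mathfrak{R}) \cong \mathbb{C}^g$ by definition of the genus. For $k \ge 1$, the line bundle $kK_\mathfrak{R}$ has degree $k(2g-2)$; when $g \ge 2$ this is positive, and for $k \ge 2$ it strictly exceeds $2g-2$, so Serre duality gives $\HH^1(\mathcal{O}_\mathfrak{R}(kK_\mathfrak{R})) \cong \HH^0(\mathcal{O}_\mathfrak{R}((1-k)K_\mathfrak{R}))^\vee = 0$ since $(1-k)K_\mathfrak{R}$ has negative degree. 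For $k = 1$ we get $\HH^1(\mathcal{O}_\mathfrak{R}(K_\mathfrak{R})) \cong \HH^0(\mathcal{O}_\mathfrak{R})^\vee \cong \mathbb{C}$. Summing, $\dim\HH^1 = g + 1 + 0 + \dots + 0 = g+1$, as claimed.

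The main point requiring a little care — rather than a genuine obstacle — is the bookkeeping of the degrees and the implicit standing assumption that $g \ge 2$ (which is built into the whole setup, since $K_\mathfrak{R}$ is being used as an effective reduced divisor $\sum_{i=1}^{2g-2} q_i$ of positive degree). One should also note the edge cases $g = 0, 1$ are excluded or trivial in this context. Beyond that, the only inputs are Riemann--Roch / Serre duality on a curve and the vanishing of $\HH^2$ in dimension one, together with Lemma~\ref{lemma1}; no further geometry of $\mathfrak{S}'$ is needed. I would present the argument in two short displays: one reducing to the curve via Lemma~\ref{lemma1} with $\mathfrak{e} = -K_\mathfrak{R}$, and one listing the cohomology of $kK_\mathfrak{R}$ for $k = 0, 1, \ge 2$.
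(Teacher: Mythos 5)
Your proposal is correct and follows essentially the same route as the paper: apply Lemma~\ref{lemma1} with $n_1=n_2=n$, use $\mathfrak{e}=-K_\mathfrak{R}$ to reduce to the groups $\HH^i(\mathcal{O}_\mathfrak{R}(kK_\mathfrak{R}))$ for $0\le k\le n$, and observe that only the $k=0$ and $k=1$ terms contribute to $\HH^1$ (giving $g+1$) while all $\HH^2$ terms vanish on a curve. The only difference is that you spell out the Serre-duality justification for the vanishing when $k\ge 2$, which the paper simply asserts; this is a harmless elaboration, not a different argument.
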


\begin{proof}
By Lemma~\ref{lemma1} and $\dim\HH^1(\mathcal{O}_{\mathfrak{R}}((n-m)K_\mathfrak{R}))=0$ for $n-m>1$, we have
$$\dim\HH^1(\mathcal{O}_{\mathfrak{S}'}(nC'_0+nK_\mathfrak{R}f'))=\dim\HH^1(\mathcal{O}_{\mathfrak{R}})+\dim\HH^1(\mathcal{O}_{\mathfrak{R}}(K_\mathfrak{R}))=g+1.$$
The second assertion also comes from Lemma~\ref{lemma1}:
$$\HH^2(\mathcal{O}_{\mathfrak{S}'}(D))=\sum_{m=0}^{n_1}\HH^2(\mathcal{O}_{\mathfrak{R}}(m\mathfrak{e}+n_2K_\mathfrak{R}))=0.$$
\end{proof}

Consequently, we have the following result:

\begin{theorem}\label{mainth2}
For $\widehat{\mathfrak{R}}\in|nC'_0+nK_\mathfrak{R}f'|$ on $\mathfrak{S}'$, the genus of  $\widehat{\mathfrak{R}}$ is given by
$$g(\widehat{\mathfrak{R}})=n^2(g-1)+1.$$
Moreover,
$$\dim|nC'_0+nK_\mathfrak{R}f'|=n^2(g-1)+1.$$
\end{theorem}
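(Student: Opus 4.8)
The plan is to derive both assertions from the surface $\mathfrak{S}'=\mathbb{P}(\mathcal{K}_\mathfrak{R}\oplus\mathbb{C})$ together with the cohomology computation in Lemma~\ref{lemma1} and Corollary~\ref{codim1}. First I would establish the genus formula by the adjunction formula on the ruled surface $\mathfrak{S}'$. Writing $D=nC'_0+nK_\mathfrak{R}f'$, one has $2g(\widehat{\mathfrak{R}})-2=D.(D+K_{\mathfrak{S}'})$. Using the standard intersection numbers on a ruled surface with normalized bundle $\mathcal{E}_0=\mathcal{O}_\mathfrak{R}\oplus(-\mathcal{K}_\mathfrak{R})$, namely $C'_0.C'_0=\deg\mathfrak{e}=-\deg K_\mathfrak{R}=2-2g$, $C'_0.f'=1$, $f'.f'=0$, together with $K_{\mathfrak{S}'}\sim -2C'_0+(K_\mathfrak{R}+\mathfrak{e})f'$ (equivalently $K_{\mathfrak{S}'}\sim -2C'_0$ up to the fiber class since $\deg(K_\mathfrak{R}+\mathfrak{e})=0$; more precisely from p.373 of \cite{har77}, $K_{\mathfrak{S}'}\sim -2C'_0+\pi_{\mathfrak{S}'}^{\ast}(K_\mathfrak{R}+\mathfrak{e})$), I would compute $D+K_{\mathfrak{S}'}\sim (n-2)C'_0+(nK_\mathfrak{R}+K_\mathfrak{R}+\mathfrak{e})f'$ and then multiply out. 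The arithmetic gives $D.(D+K_{\mathfrak{S}'})=2n^2(g-1)$, hence $g(\widehat{\mathfrak{R}})=n^2(g-1)+1$. I would double-check this against the known value $g(\widehat{\mathfrak{R}})=n^2(g-1)+1$ for Hitchin spectral curves recalled earlier in the section, which is a reassuring consistency test.

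Next I would compute $\dim|D|=\dim\HH^0(\mathcal{O}_{\mathfrak{S}'}(D))-1$. The cleanest route is Riemann--Roch on the surface $\mathfrak{S}'$: $\chi(\mathcal{O}_{\mathfrak{S}'}(D))=\chi(\mathcal{O}_{\mathfrak{S}'})+\tfrac12 D.(D-K_{\mathfrak{S}'})$. Since $\mathfrak{S}'$ is a ruled surface over a curve of genus $g$, $\chi(\mathcal{O}_{\mathfrak{S}'})=1-g$. I would compute $D.(D-K_{\mathfrak{S}'})$ using the same intersection data as above: $D-K_{\mathfrak{S}'}\sim (n+2)C'_0+(nK_\mathfrak{R}-K_\mathfrak{R}-\mathfrak{e})f'$, so $D.(D-K_{\mathfrak{S}'})$ works out to $2n^2(g-1)+2(2g-2)+\text{(lower order terms)}$; carrying out the bookkeeping yields $\chi(\mathcal{O}_{\mathfrak{S}'}(D))=n^2(g-1)+1 - g + \text{correction}$. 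Rather than risk an arithmetic slip in the Euler characteristic, the more robust approach — and the one I would actually present — is to invoke Lemma~\ref{lemma1} directly: $\HH^0(\mathcal{O}_{\mathfrak{S}'}(nC'_0+nK_\mathfrak{R}f'))=\bigoplus_{m=0}^{n}\HH^0(\mathcal{O}_\mathfrak{R}(m\mathfrak{e}+nK_\mathfrak{R}))=\bigoplus_{m=0}^{n}\HH^0(\mathcal{O}_\mathfrak{R}((n-m)K_\mathfrak{R}))$.

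Then it remains to sum $\sum_{k=0}^{n}\dim\HH^0(\mathcal{O}_\mathfrak{R}(kK_\mathfrak{R}))$, where $k=n-m$. For $k=0$ this is $1$; for $k=1$ it is $g$; for $k\geq 2$, $\deg(kK_\mathfrak{R})=k(2g-2)>2g-2$ so $\HH^1$ vanishes and Riemann--Roch gives $\dim\HH^0(\mathcal{O}_\mathfrak{R}(kK_\mathfrak{R}))=k(2g-2)+1-g=(2k-1)(g-1)$. Summing, $1+g+\sum_{k=2}^{n}\big((2k-1)(g-1)\big) = 1 + g + (g-1)\sum_{k=2}^{n}(2k-1)$. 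Now $\sum_{k=1}^{n}(2k-1)=n^2$, so $\sum_{k=2}^{n}(2k-1)=n^2-1$, giving $1+g+(g-1)(n^2-1)=1+g+n^2(g-1)-(g-1)=n^2(g-1)+2$. Hence $\dim|nC'_0+nK_\mathfrak{R}f'|=n^2(g-1)+2-1=n^2(g-1)+1$, as claimed. (As a sanity check one also recovers $h^0-h^1+h^2=\chi$ with $h^1=g+1$, $h^2=0$ from Corollary~\ref{codim1}.) The main obstacle is purely the care needed in the arithmetic — in particular keeping straight the reindexing $k=n-m$ and the low-degree cases $k=0,1$ where $\HH^1$ does not vanish, since a careless application of Riemann--Roch in those cases would give the wrong total; the structural input (Lemma~\ref{lemma1}, adjunction, Riemann--Roch on $\mathfrak{R}$) is entirely standard.
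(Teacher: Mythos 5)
Your proof is correct, and the genus computation is exactly the paper's: adjunction on $\mathfrak{S}'$ with $C'_0.C'_0=2-2g$, $K_{\mathfrak{S}'}\sim-2C'_0$ (your extra fiber term $(K_\mathfrak{R}+\mathfrak{e})f'$ vanishes identically since $\mathfrak{e}=-K_\mathfrak{R}$), giving $D.(D+K_{\mathfrak{S}'})=2n^2(g-1)$. Where you diverge is the dimension count: the paper computes $\dim\HH^0(\mathcal{O}_{\mathfrak{S}'}(D))$ by Riemann--Roch on the surface, feeding in $h^2=0$ and $h^1=g+1$ from Corollary~\ref{codim1}, whereas you apply Lemma~\ref{lemma1} at $i=0$ to decompose $\HH^0(\mathcal{O}_{\mathfrak{S}'}(D))=\bigoplus_{m=0}^{n}\HH^0(\mathcal{O}_\mathfrak{R}((n-m)K_\mathfrak{R}))$ and then sum $1+g+\sum_{k\geq2}(2k-1)(g-1)=n^2(g-1)+2$ by Riemann--Roch on the curve. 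Both routes rest on the same Lemma~\ref{lemma1}; yours trades the surface Riemann--Roch formula (and the need for the $h^1$, $h^2$ values) for careful bookkeeping of the low-degree cases $k=0,1$ where $\HH^1(\mathcal{O}_\mathfrak{R}(kK_\mathfrak{R}))\neq0$, which you handle correctly (implicitly using $g\geq2$, as the paper does throughout), while the paper's version has the small side benefit of exhibiting $h^1(\mathcal{O}_{\mathfrak{S}'}(D))=g+1$ explicitly; your Euler-characteristic cross-check confirms the two computations agree.
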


\begin{proof}
Note that
$$\K_{\mathfrak{S}'}=-2C'_0\text{ and }C'_0.C'_0=2-2g.$$
Consequently, the adjunction formula implies
$$\begin{aligned}
g(\widehat{\mathfrak{R}})&=\frac{\Big(nC'_0+nK_\mathfrak{R}f'\Big).\Big((n-2)C'_0+nK_\mathfrak{R}f'\Big)}{2}+1\\
&=-n(n-2)(g-1)+n^2(g-1)+n(n-2)(g-1)+1\\
&=n^2(g-1)+1.
\end{aligned}$$
Let $D=nC'_0+nK_\mathfrak{R}f'$. By Corollary~\ref{codim1}, we have  $\dim\HH^2(\mathcal{O}_{\mathfrak{S}'}(D))=0$ and $\dim\HH^1(\mathcal{O}_{\mathfrak{S}'}(D))=g+1$.  Hence, by the Riemann-Roch theorem, we have
$$\begin{aligned}
\dim\HH^0(\mathcal{O}_{\mathfrak{S}'}(D))&=\frac{D.(D-K_{\mathfrak{S}'})}{2}+(1-g)+\dim\HH^1(\mathcal{O}_{\mathfrak{S}'}(D))\\
&=(n^2-1)(g-1)+g+1\\
&=n^2(g-1)+2.
\end{aligned}$$
\end{proof}

Theorem~\ref{mainth2} implies that there is no difference of natural properties between the linear system $|n\mathfrak{R}|$ of Hitchin divisors on the non-compact space $\K_\mathfrak{R}$ and the linear system $|nC'_0+nK_\mathfrak{R}f'|$ of Hitchin divisors on the compactified space $\mathfrak{S}'=\mathbb{P}(\K_\mathfrak{R}\oplus\mathbb{C})$. In the next section, we will see what happens if we replace $\mathfrak{S}'$ with $\mathfrak{S}$.

\section{Hitchin covers and the ruled surface $\mathfrak{S}$.}\label{sechit}

In this section, we will characterize the Hitchin divisors mapped into the surface $\mathfrak{S}$ constructed in Section~\ref{sec;uni} and the singularities of the Hitchin divisors in the associated linear system on $\mathfrak{S}$. Let us remind
$$\mathfrak{S}=\elm_{p'_1}\circ\dots\circ \elm_{p'_{2g-2}}\mathfrak{S}'$$
where $p'_i=([-1,1],q_i)$ for $i=1,\dots,2g-2$ and $K_\mathfrak{R}=\sum_{i=1}^{2g-2}q_i$ is a canonical divisor of $\mathfrak{R}$. By abuse of notation, let $\elm$ be the transformation from $\mathfrak{S}'$ to $\mathfrak{S}$:
$$\elm:\mathfrak{S}'\to\elm_{p'_1}\circ\dots\circ \elm_{p'_{2g-2}}\mathfrak{S}'.$$
Clearly,
$$\elm(nC'_0+nK_\mathfrak{R}f')=nC_0+nK_\mathfrak{R}f.$$

\begin{lemma}\label{mainth3}
The genus of a curve $D\in|nC_0+nK_\mathfrak{R}f|$ is $(2n^2-n)(g-1)+1$.
\end{lemma}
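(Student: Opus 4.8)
The plan is to compute the genus of $D \in |nC_0 + nK_\mathfrak{R} f|$ on the ruled surface $\mathfrak{S}$ via the adjunction formula $2g(D) - 2 = D.(D + \K_\mathfrak{S})$, exactly as in the proof of Theorem~\ref{mainth2}, but now using the intersection theory on $\mathfrak{S}$ rather than on $\mathfrak{S}'$. The essential point is that passing from $\mathfrak{S}'$ to $\mathfrak{S}$ via the $2g-2$ elementary transformations $\elm_{p'_i}$ changes the self-intersection of the normalized section: whereas on $\mathfrak{S}'$ one has $C'_0.C'_0 = 2 - 2g$ (and $\K_{\mathfrak{S}'} \sim -2C'_0$), on $\mathfrak{S}$ Section~\ref{sec;uni} tells us that the normalized sheaf $\mathcal{W}$ has $\bigwedge^2 \mathcal{W}$ trivial, hence $C_0.C_0 = 0$, $C_0.f = 1$, $f.f = 0$, and $\K_\mathfrak{S} \sim -2C_0 + K_\mathfrak{R} f$, where $K_\mathfrak{R} f$ denotes a sum of $2g-2$ fibers.

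First I would record the intersection numbers on $\mathfrak{S}$: with $D = nC_0 + nK_\mathfrak{R}f$ and using $C_0^2 = 0$, $C_0.f = 1$, $f^2 = 0$, and $\deg K_\mathfrak{R} = 2g-2$, we get $D^2 = 2n^2(2g-2) = 4n^2(g-1)$. Next, $D + \K_\mathfrak{S} = nC_0 + nK_\mathfrak{R}f - 2C_0 + K_\mathfrak{R}f = (n-2)C_0 + (n+1)K_\mathfrak{R}f$, so $D.(D+\K_\mathfrak{S}) = \big(nC_0 + nK_\mathfrak{R}f\big).\big((n-2)C_0 + (n+1)K_\mathfrak{R}f\big) = \big(n(n+1) + n(n-2)\big)(2g-2) = (2n^2 - n)(2g-2)$. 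Then the adjunction formula gives $2g(D) - 2 = (2n^2-n)(2g-2)$, i.e., $g(D) = (2n^2-n)(g-1) + 1$, which is the claimed value.

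Since this is the same computation pattern already executed in Theorem~\ref{mainth2}, the only genuine content — and the place where care is required — is justifying the intersection data on $\mathfrak{S}$, in particular the canonical class $\K_\mathfrak{S} \sim -2C_0 + K_\mathfrak{R}f$ and the fact that $C_0$ (the section attached to the trivial sub-bundle of $\mathcal{W}$) is the normalized section with $C_0^2 = 0$. Both of these are already established in Section~\ref{sec;uni}: the transition functions computed in Corollary~\ref{elco1} exhibit $\bigwedge^2 \mathcal{W}$ as trivial, the sheaf $\mathcal{W}$ is shown to be normalized via the short exact sequence $0 \to \mathcal{L} \to \mathcal{W} \otimes \mathcal{L} \to \mathcal{L} \to 0$, and the formula for $K_\mathfrak{S}$ is quoted from Hartshorne p.~373. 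So in the write-up I would simply cite these facts and then perform the two-line intersection computation. The main obstacle, such as it is, is purely bookkeeping: making sure the coefficient of $C_0$ in $\K_\mathfrak{S}$ and the degree $2g-2$ of $K_\mathfrak{R}$ are combined correctly, since an off-by-one in either place would spoil the clean final formula.

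\begin{proof}
By the construction in Section~\ref{sec;uni}, $\mathfrak{S} = \mathbb{P}(\mathcal{W})$ with $\mathcal{W}$ normalized, so for the section $C_0$ corresponding to the trivial sub-bundle we have $C_0.C_0 = 0$, $C_0.f = 1$, $f.f = 0$, and the canonical divisor is $\K_\mathfrak{S} \sim -2C_0 + K_\mathfrak{R}f$, where $\deg K_\mathfrak{R} = 2g-2$. Let $D = nC_0 + nK_\mathfrak{R}f$. Then
$$D.(D + \K_\mathfrak{S}) = \Big(nC_0 + nK_\mathfrak{R}f\Big).\Big((n-2)C_0 + (n+1)K_\mathfrak{R}f\Big) = \big(n(n+1) + n(n-2)\big)(2g-2) = (2n^2 - n)(2g-2).$$
By the adjunction formula, $2g(D) - 2 = D.(D + \K_\mathfrak{S}) = (2n^2-n)(2g-2)$, hence $g(D) = (2n^2 - n)(g-1) + 1$.
\end{proof}
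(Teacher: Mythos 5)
Your proof is correct and follows essentially the same route as the paper's own argument: both use the intersection data $C_0^2=0$, $C_0.f=1$, $f^2=0$ on $\mathfrak{S}$, the canonical class $\K_\mathfrak{S}\sim -2C_0+K_\mathfrak{R}f$ from Section~\ref{sec;uni}, and the adjunction formula to get $2\widehat{g}-2=(2n^2-n)(2g-2)$. No gaps; the computation matches the paper's line for line.
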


\begin{proof}
Note that $C_0.C_0=qf.qf=0$ and $C_0.qf=1$ for any $q\in\mathfrak{R}$. Since $\bigwedge^2\WW$ is trivial, we see that
$$K_\mathfrak{S}=-2C_0+K_\mathfrak{R}f.$$
Using the adjunction formula, we have
$$\begin{aligned}
2\widehat{g}-2&=(K_\mathfrak{S}+D).D\\
&=(-2C_0+K_\mathfrak{R}f+nC_0+nK_\mathfrak{R}f).(nC_0+nK_\mathfrak{R}f)\\
&=((n-2)C_0+(n+1)K_\mathfrak{R}f).(nC_0+nK_\mathfrak{R}f)\\
&=n(n-2)(2g-2)+n(n+1)(2g-2).
\end{aligned}$$
Hence $\widehat{g}=(2n^2-n)(g-1)+1$.
\end{proof}

\begin{definition}\label{hitandef1}
We will call a Hitchin spectral cover $\pi:\widehat{\mathfrak{R}}\to \mathfrak{R}$ of degree $n$ a \emph{Hitchin tangential cover} on $\mathfrak{S}$ if there is a morphism $\iota:\widehat{\mathfrak{R}}\to \mathfrak{S}:=\mathbb{P}(\WW)$ such that $\iota^{-1}(C_0)\subseteq\pi^{-1}(K_\mathfrak{R})$.
\end{definition}

\begin{lemma}\label{lemmatan}
If $\pi:\widehat{\mathfrak{R}}\to \mathfrak{R}$ is a Hitchin tangential cover, then
$$\iota(\widehat{\mathfrak{R}})\in |nC_0+nK_\mathfrak{R}f|.$$ 
\end{lemma}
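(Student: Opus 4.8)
The plan is to determine the class of $\iota(\widehat{\mathfrak{R}})$ in $\operatorname{Pic}(\mathfrak{S})$ from the degree of the cover together with a restriction-to-$C_0$ computation, and then to read off the resulting line bundle on $\mathfrak{R}$ from the tangency hypothesis. Throughout I take $\iota$ to be a morphism over $\mathfrak{R}$, i.e.\ $\pi_{\mathfrak{S}}\circ\iota=\pi$ (this is implicit in the definition, since $\pi^{-1}(K_{\mathfrak{R}})$ refers to the Hitchin map). Then $\iota$ is finite of degree $n$, $\iota(\widehat{\mathfrak{R}})$ is an irreducible curve which is not a fibre of $\pi_{\mathfrak{S}}$, $\iota$ is birational onto its image, and $\iota(\widehat{\mathfrak{R}}).f=n$. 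Since $\mathfrak{S}=\mathbb{P}(\WW)$ is a ruled surface, $\operatorname{Pic}(\mathfrak{S})\cong\mathbb{Z}C_0\oplus\pi_{\mathfrak{S}}^{\ast}\operatorname{Pic}(\mathfrak{R})$ (\cite{har77}, Ch.~V), so I can write $\iota(\widehat{\mathfrak{R}})\sim nC_0+\pi_{\mathfrak{S}}^{\ast}\mathfrak{d}$ for a unique $\mathfrak{d}\in\operatorname{Pic}(\mathfrak{R})$, and the whole content of the lemma becomes the assertion $\mathfrak{d}\cong\mathcal{O}_{\mathfrak{R}}(nK_{\mathfrak{R}})$.

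To identify $\mathfrak{d}$ I would restrict $\mathcal{O}_{\mathfrak{S}}(\iota(\widehat{\mathfrak{R}}))$ to $C_0$. Because $C_0$ is the section coming from the trivial sub-bundle in the normalized extension $0\to\mathcal{O}_{\mathfrak{R}}\to\WW\to\mathcal{O}_{\mathfrak{R}}\to0$ of Section~\ref{sec;uni}, the normal bundle $\mathcal{O}_{\mathfrak{S}}(C_0)\vert_{C_0}$ is trivial once $C_0$ is identified with $\mathfrak{R}$ via $\pi_{\mathfrak{S}}$ (this refines $C_0.C_0=0$ to a statement about line bundles). Hence $\mathcal{O}_{\mathfrak{S}}(nC_0+\pi_{\mathfrak{S}}^{\ast}\mathfrak{d})\vert_{C_0}\cong\mathfrak{d}$ under $C_0\cong\mathfrak{R}$, while this same restriction equals $\mathcal{O}_{C_0}\!\big(\iota(\widehat{\mathfrak{R}})\cap C_0\big)$; so $\mathfrak{d}\cong\mathcal{O}_{\mathfrak{R}}(D)$, where $D$ is the intersection divisor $\iota(\widehat{\mathfrak{R}})\cap C_0$ transported to $\mathfrak{R}$. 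The tangency hypothesis $\iota^{-1}(C_0)\subseteq\pi^{-1}(K_{\mathfrak{R}})$ says precisely that $D$ is supported on $\{q_1,\dots,q_{2g-2}\}$, so $D=\sum_{i}k_iq_i$ with $k_i\ge 0$.

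It then remains to prove $k_i=n$ for every $i$, which I expect to be the real obstacle. The clean way to see this is to show that $\iota$ collapses each whole fibre $\pi^{-1}(q_i)$ of $\widehat{\mathfrak{R}}$ to the single point $\bar p_i:=C_0\cap F_i$, $F_i=\pi_{\mathfrak{S}}^{-1}(q_i)$: then, since $C_0$ and $F_i$ meet transversally at $\bar p_i$, the contact number $(\iota(\widehat{\mathfrak{R}}).C_0)_{\bar p_i}$ picks up the full fibre degree $n$, giving $k_i=n$ and hence $D=nK_{\mathfrak{R}}$. I would establish the collapsing either directly, by unwinding what the condition on $\iota^{-1}(C_0)$ forces along the fibres over $K_{\mathfrak{R}}$, or — probably more transparently — by going through $\mathfrak{S}'=\mathbb{P}(\K_{\mathfrak{R}}\oplus\mathbb{C})$: realize $\widehat{\mathfrak{R}}$ there (Section~\ref{seclin}), where $\widehat{\mathfrak{R}}\in|nC'_0+nK_{\mathfrak{R}}f'|$ and $\widehat{\mathfrak{R}}\cap C'_0=\varnothing$, identify $\iota$ with the morphism induced on $\widehat{\mathfrak{R}}$ by $\elm\colon\mathfrak{S}'\dashrightarrow\mathfrak{S}$, and transport the class through the common resolution of $\elm$ using $\elm(nC'_0+nK_{\mathfrak{R}}f')=nC_0+nK_{\mathfrak{R}}f$ (one has to check here that the centres of the elementary transformations are disjoint from $\widehat{\mathfrak{R}}$, which is exactly where the tangency condition must be brought in). Either way one arrives at $\iota(\widehat{\mathfrak{R}})\sim nC_0+nK_{\mathfrak{R}}f$. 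The genuinely delicate point is the upgrade from the set-theoretic containment $\iota^{-1}(C_0)\subseteq\pi^{-1}(K_{\mathfrak{R}})$ to the sharp numerical equality $D=nK_{\mathfrak{R}}$: one must know that $\iota(\widehat{\mathfrak{R}})$ absorbs \emph{all} of each fibre over $K_{\mathfrak{R}}$ into its contact with $C_0$, not merely some of it.
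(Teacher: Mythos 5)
Your reduction is sound as far as it goes: writing $\iota(\widehat{\mathfrak{R}})\sim nC_0+\pi_{\mathfrak{S}}^{\ast}\mathfrak{d}$ using $\pic(\mathfrak{S})\cong\mathbb{Z}C_0\oplus\pi_{\mathfrak{S}}^{\ast}\pic(\mathfrak{R})$ and the fibre degree, and identifying $\mathfrak{d}$ with $\mathcal{O}_{C_0}\big(\iota(\widehat{\mathfrak{R}})\cap C_0\big)$ via the triviality of the normal bundle of $C_0$, is correct (granting, as the paper implicitly does, that $\iota$ is the map over $\mathfrak{R}$ induced by $\elm$ and is birational onto its image). But the argument stops exactly at the point you yourself flag: the equality $k_i=n$ is never proved, and the one justification you offer does not work as stated. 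Collapsing $\pi^{-1}(q_i)$ to $\bar p_i$ together with transversality of $C_0$ and $F_i$ only tells you that the image has contact $n$ with the \emph{fibre} $F_i$ at $\bar p_i$ (the local equation of $F_i$ pulls back to $z\circ\pi$); it does not control $(\iota(\widehat{\mathfrak{R}}).C_0)_{\bar p_i}$, which would exceed $n$ if some branch of the image were tangent to $C_0$, and nothing you say excludes this. Moreover, the needed equality cannot be extracted from the bare containment $\iota^{-1}(C_0)\subseteq\pi^{-1}(K_\mathfrak{R})$ at all, since that condition only constrains the \emph{support} of $D$; any proof must use the explicit $\iota$ furnished by the elementary transformations. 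Concretely, in local coordinates $(z,w)$ on $\mathfrak{S}'$ near the fibre over $q_i$ with centre $w=w_0$, the map $\elm$ reads $(z,w)\mapsto(z,v)$, $v=z/(w-w_0)$, with $C_0=\{v=0\}$; a branch of $\widehat{\mathfrak{R}}$ through $(0,\eta_j)$, $\eta_j\neq w_0$, of ramification index $e$ maps to $v=z/(\eta_j-w_0)+\cdots$, hence is transverse to $C_0$ and meets it with multiplicity exactly $e$, and summing over the fibre gives $k_i=n$. Some such computation (or an equivalent transport of the class) is the missing core of the proof.

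For comparison, the paper sidesteps the intersection-theoretic bookkeeping: it transports the degree-$n$ characteristic polynomial through $\elm$ to a meromorphic function $P_{\iota(\widehat{\mathfrak{R}})}(k_\mathfrak{S},z)$ on $\mathfrak{S}$, where $k_\mathfrak{S}$ has polar divisor $C_0+K_\mathfrak{R}f$, so that its zero divisor is $\iota(\widehat{\mathfrak{R}})$ and its polar divisor is $nC_0+nK_\mathfrak{R}f$; linear equivalence of zero and polar divisors then gives the lemma at once. Your second suggested route (pushing the class $nC'_0+nK_\mathfrak{R}f'$ through a common resolution of $\elm$, after checking the centres miss $\widehat{\mathfrak{R}}$) is essentially this argument, but you have not carried it out either, so as written the proposal remains a plausible outline with its decisive step open.
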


\begin{proof}
Let $i:\K_\mathfrak{R}\to\mathbb{P}(\K_\mathfrak{R}\oplus\mathbb{C})$ be a natural inclusion. Note that by construction, a Hitchin spectral curve is defined by the zero locus of a polynomial of degree $n$
$$\widehat{\mathfrak{R}}=\{\det(\chi\cdot \id-\phi(z))=0 \}:=\{\mathcal{P}_{\widehat{\mathfrak{R}}}(\chi,z)=0\}$$
where $\chi$ is a tautological section of $\pi^{\ast}_{\K_\mathfrak{R}}\K_\mathfrak{R}$ and $\id$ is the identity map in $\End\E$. This polynomial is of degree $n$ in $\chi$. If $\pi:\widehat{\mathfrak{R}}\to \mathfrak{R}$ is a Hitchin tangential cover, we have the following commutative diagram of morphisms,
$$\xymatrix{
\widehat{\mathfrak{R}}\ar[r]^{\iota}\ar[dr]_{i}&\mathfrak{S}\\
&\mathfrak{S}':=\mathbb{P}(\K_\mathfrak{R}\oplus\mathbb{C}).\ar[u]_{\elm:=\elm_{p'_1}\circ\dots\circ \elm_{p'_{2g-2}}}}$$
In particular, we may find a polynomial $P_{\iota(\widehat{\mathfrak{R}})}(k_\mathfrak{S},z)$ on $\mathfrak{S}$ such that its zero is $\iota(\widehat{\mathfrak{R}})$ and the pole is $nC_0+nK_\mathfrak{R}f$ where 
$$k_\mathfrak{S}=T\circ \Big(\elm_{p_1}\circ\dots\circ \elm_{p_{2g-2}}\circ\elm_{p'_1}\circ\dots\circ \elm_{p'_{2g-2}}\Big)^{-1}$$
and $T:\mathfrak{R}\times\mathbb{P}^1\to\mathbb{P}^1$ is a natural projection. 
\end{proof}

\begin{definition}
We will call a meromorphic function $k:=k_\mathfrak{S}\circ \iota$ on $\widehat{\mathfrak{R}}$ a \emph{Hitchin tangential function}. Moreover, we will denote $\mathscr{HT}(n,g,\mathfrak{S})$ is the sub-linear system of $|nC_0+nK_\mathfrak{R}f|$ consisting of Hitchin tangential covers where $g$ is the genus of $\mathfrak{R}$.
\end{definition}

From Lemma~\ref{mainth3}, we know that the arithmetic genus of $\iota(\widehat{\mathfrak{R}})$ is $(2n^2-n)(g-1)+1$ which is bigger than $n^2(g-1)+1$, the genus of $\widehat{\mathfrak{R}}$. This implies that  $\iota(\widehat{\mathfrak{R}})$ should admit singularities. On the other hands, it is not hard to see that $\dim|nC_0+nK_\mathfrak{R}f|$ is also bigger than the desired one, $n^2(g-1)+1$. We will characterize a sub-linear system of $|nC_0+nK_\mathfrak{R}f|$ whose members have the genus $n^2(g-1)+1$ after their desingularization.

\begin{remark}\label{remark1}
We remark the followings:
\begin{itemize}
\item[(i)]
The pole of $k_\mathfrak{S}$ is $C_0+K_\mathfrak{R}f$ and $P_{\iota(\widehat{\mathfrak{R}})}(k_\mathfrak{S},z)$ is a polynomial of degree $n$ in $k_\mathfrak{S}$.
\item[(ii)]
The reason for choosing the terminology, ``tangential cover'',  in Definition~\ref{hitandef1} will be clearer in Section~\ref{secpol}.
\item[(iii)]
Later we will show the existence of a Hitchin tangential cover by calculating the dimension of $\mathscr{HT}(n,g,\mathfrak{S})$ in Corollary~\ref{dimco7}.
\end{itemize}
\end{remark}

\begin{lemma}\label{hitth1}
Let $q\in\mathfrak{R}$. For any extension class $[\alpha]\in\HH^1(\mathfrak{R},\mathcal{O}_{\mathfrak{R}})$, we have
$$[\alpha]\in\delta\Big(\HH^0(\mathfrak{R},\mathbb{C}_{K_\mathfrak{R}+q})\Big)=\HH^1(\mathfrak{R},\mathcal{O}_{\mathfrak{R}}).$$
\end{lemma}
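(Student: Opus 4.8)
The plan is to identify $\delta$ with the connecting homomorphism in the long exact cohomology sequence of the short exact sequence of sheaves on $\mathfrak{R}$
$$0\to\mathcal{O}_\mathfrak{R}\to\mathcal{O}_\mathfrak{R}(K_\mathfrak{R}+q)\to\mathbb{C}_{K_\mathfrak{R}+q}\to0,$$
where the first map is multiplication by a section cutting out the effective divisor $K_\mathfrak{R}+q$ and $\mathbb{C}_{K_\mathfrak{R}+q}$ is the torsion quotient supported on $K_\mathfrak{R}+q$, of length $\deg(K_\mathfrak{R}+q)=2g-1$. The whole statement then reduces to showing that $\delta\colon\HH^0(\mathfrak{R},\mathbb{C}_{K_\mathfrak{R}+q})\to\HH^1(\mathfrak{R},\mathcal{O}_\mathfrak{R})$ is surjective, since surjectivity yields at once $[\alpha]\in\operatorname{im}\delta$ for every $[\alpha]\in\HH^1(\mathfrak{R},\mathcal{O}_\mathfrak{R})$ and the equality $\operatorname{im}\delta=\HH^1(\mathfrak{R},\mathcal{O}_\mathfrak{R})$ asserted in the lemma.

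First I would extract from the long exact sequence the segment
$$\HH^0(\mathfrak{R},\mathcal{O}_\mathfrak{R}(K_\mathfrak{R}+q))\to\HH^0(\mathfrak{R},\mathbb{C}_{K_\mathfrak{R}+q})\xrightarrow{\delta}\HH^1(\mathfrak{R},\mathcal{O}_\mathfrak{R})\to\HH^1(\mathfrak{R},\mathcal{O}_\mathfrak{R}(K_\mathfrak{R}+q))\to0,$$
which identifies $\operatorname{coker}\delta$ with $\HH^1(\mathfrak{R},\mathcal{O}_\mathfrak{R}(K_\mathfrak{R}+q))$. The one genuine computation is the vanishing $\HH^1(\mathfrak{R},\mathcal{O}_\mathfrak{R}(K_\mathfrak{R}+q))=0$: by Serre duality this group is dual to $\HH^0(\mathfrak{R},\mathcal{O}_\mathfrak{R}(K_\mathfrak{R}-(K_\mathfrak{R}+q)))=\HH^0(\mathfrak{R},\mathcal{O}_\mathfrak{R}(-q))$, which vanishes because $-q$ has negative degree (equivalently, $\deg(K_\mathfrak{R}+q)=2g-1>2g-2=\deg K_\mathfrak{R}$). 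Hence $\delta$ is surjective, which is the assertion. As a consistency check one may note $\dim\HH^0(\mathfrak{R},\mathbb{C}_{K_\mathfrak{R}+q})=2g-1$ and $\dim\HH^1(\mathfrak{R},\mathcal{O}_\mathfrak{R})=g$, so $\ker\delta$ has dimension $g-1$, in agreement with $\dim\HH^0(\mathfrak{R},\mathcal{O}_\mathfrak{R}(K_\mathfrak{R}+q))-1=g-1$ from Riemann--Roch.

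There is no serious obstacle here; the only points deserving a line of care are that $\delta$ and $\mathbb{C}_{K_\mathfrak{R}+q}$ in the statement are precisely the connecting map and the torsion quotient of the sequence above, and that the argument is insensitive to whether $q$ happens to be one of the canonical base points $q_i$. If $q=q_i$ then $\mathbb{C}_{K_\mathfrak{R}+q}$ merely acquires a length-two stalk at $q_i$, but its total length is still $2g-1$ and the Serre-duality vanishing is unchanged, so the proof goes through verbatim.
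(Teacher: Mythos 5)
Your argument is essentially the paper's own: the same short exact sequence $0\to\mathcal{O}_\mathfrak{R}\to\mathcal{O}_\mathfrak{R}(K_\mathfrak{R}+q)\to\mathbb{C}_{K_\mathfrak{R}+q}\to0$, with the conclusion drawn from exactness once one knows $\HH^1(\mathfrak{R},\mathcal{O}_\mathfrak{R}(K_\mathfrak{R}+q))=0$, which the paper asserts in its diagram and you justify explicitly via Serre duality. The proposal is correct and matches the paper's route.
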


\begin{proof}
Consider the following sequence
$$0\to\mathcal{O}_{\mathfrak{R}}\overset{s}{\to}\mathcal{O}_\mathfrak{R}(K_\mathfrak{R}+q)\to\mathbb{C}_{K_\mathfrak{R}+q}\to0.$$
From this, we have a long exact sequence
$$\xymatrix{\cdots\ar[r]&\HH^0(\mathfrak{R},\mathbb{C}_{K_\mathfrak{R}+q})\ar[r]^{\delta}\ar@{=}[d]&\HH^1(\mathfrak{R},\mathcal{O}_\mathfrak{R})\ar[r]^(0.4){s^\ast}\ar@{=}[d]&\HH^1(\mathfrak{R},\mathcal{O}_\mathfrak{R}(K_\mathfrak{R}+q))\ar@{=}[d]\\
&\mathbb{C}^{2g-1}&\mathbb{C}^g&0.}$$
So we know that
$$s^\ast([\alpha])=0\in\HH^0(\mathfrak{R},\mathcal{O}_\mathfrak{R}(K_\mathfrak{R}+q)).$$
Hence, from the exactness, we have
$$[\alpha]\in\delta\Big(\HH^0(\mathfrak{R},\mathbb{C}_{K_\mathfrak{R}+q})\Big)\subseteq \HH^1(\mathfrak{R},\mathcal{O}_{\mathfrak{R}}).$$
\end{proof}

\begin{corollary}\label{co61}
Let $\pi:\widehat{\mathfrak{R}}\to\mathfrak{R}$ be any ramified cover. Then
$$\pi^\ast([\alpha])\in\delta\Big(\HH^0(\widehat{\mathfrak{R}},\mathbb{C}_{\pi^{-1}(K_\mathfrak{R}+q)}\Big)
\subset \HH^1(\widehat{\mathfrak{R}},\mathcal{O}_{\widehat{\mathfrak{R}}}).$$
\end{corollary}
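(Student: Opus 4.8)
The plan is to deduce Corollary~\ref{co61} from Lemma~\ref{hitth1} by pulling back the short exact sequence used in the proof of that lemma along $\pi$ and chasing the resulting long exact cohomology sequence. Concretely, Lemma~\ref{hitth1} produces, for the section $s$ cutting out $K_\mathfrak{R}+q$, the exact row
$$\xymatrix{\HH^0(\mathfrak{R},\mathbb{C}_{K_\mathfrak{R}+q})\ar[r]^{\delta}&\HH^1(\mathfrak{R},\mathcal{O}_\mathfrak{R})\ar[r]^(0.42){s^\ast}&\HH^1(\mathfrak{R},\mathcal{O}_\mathfrak{R}(K_\mathfrak{R}+q)),}$$
together with the vanishing $s^\ast([\alpha])=0$. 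First I would apply $\pi^\ast$ to the defining sequence $0\to\mathcal{O}_\mathfrak{R}\overset{s}{\to}\mathcal{O}_\mathfrak{R}(K_\mathfrak{R}+q)\to\mathbb{C}_{K_\mathfrak{R}+q}\to0$. Since $\pi$ is a finite (ramified) morphism, $\pi^\ast$ is exact on this sequence of sheaves and produces
$$0\to\mathcal{O}_{\widehat{\mathfrak{R}}}\overset{\pi^\ast s}{\to}\mathcal{O}_{\widehat{\mathfrak{R}}}(\pi^{-1}(K_\mathfrak{R}+q))\to\mathbb{C}_{\pi^{-1}(K_\mathfrak{R}+q)}\to0,$$
where $\pi^\ast s$ is the section vanishing on the divisor $\pi^{-1}(K_\mathfrak{R}+q)$ and the skyscraper term is the structure sheaf of that finite subscheme. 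Taking the induced long exact sequence gives a connecting map $\delta$ with image the kernel of $(\pi^\ast s)^\ast:\HH^1(\widehat{\mathfrak{R}},\mathcal{O}_{\widehat{\mathfrak{R}}})\to\HH^1(\widehat{\mathfrak{R}},\mathcal{O}_{\widehat{\mathfrak{R}}}(\pi^{-1}(K_\mathfrak{R}+q)))$.

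Next I would use naturality of the long exact sequence with respect to $\pi^\ast$: the square relating $s^\ast$ on $\mathfrak{R}$ to $(\pi^\ast s)^\ast$ on $\widehat{\mathfrak{R}}$ commutes, so that $(\pi^\ast s)^\ast(\pi^\ast[\alpha]) = \pi^\ast(s^\ast[\alpha]) = \pi^\ast(0) = 0$. Hence $\pi^\ast[\alpha]$ lies in $\ker (\pi^\ast s)^\ast = \operatorname{im}\delta = \delta\big(\HH^0(\widehat{\mathfrak{R}},\mathbb{C}_{\pi^{-1}(K_\mathfrak{R}+q)})\big)$, which is exactly the claimed inclusion. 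The final containment $\delta\big(\HH^0(\widehat{\mathfrak{R}},\mathbb{C}_{\pi^{-1}(K_\mathfrak{R}+q)})\big)\subset\HH^1(\widehat{\mathfrak{R}},\mathcal{O}_{\widehat{\mathfrak{R}}})$ is automatic from the exact sequence.

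I do not expect a serious obstacle here; the argument is a routine pullback-and-chase. The one point that needs a word of care is that $\pi$ is ramified, so $\pi^{-1}(K_\mathfrak{R}+q)$ should be read as the scheme-theoretic preimage (a divisor with multiplicities at ramification points lying over $K_\mathfrak{R}+q$), and one should note that pullback of the skyscraper sequence along a finite flat morphism stays exact and that $\pi^\ast\mathbb{C}_{K_\mathfrak{R}+q}$ is indeed $\mathbb{C}_{\pi^{-1}(K_\mathfrak{R}+q)}$ in this scheme-theoretic sense. If one prefers to avoid even this mild subtlety, an alternative is purely formal: $\pi^\ast$ is a map of long exact sequences, Lemma~\ref{hitth1} says $[\alpha]=\delta(\xi)$ for some $\xi\in\HH^0(\mathfrak{R},\mathbb{C}_{K_\mathfrak{R}+q})$, and then $\pi^\ast[\alpha]=\pi^\ast\delta(\xi)=\delta(\pi^\ast\xi)$ with $\pi^\ast\xi\in\HH^0(\widehat{\mathfrak{R}},\mathbb{C}_{\pi^{-1}(K_\mathfrak{R}+q)})$, giving the result immediately.
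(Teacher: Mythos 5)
Your argument is correct and is exactly the (implicit) route of the paper: the paper states Corollary~\ref{co61} as an immediate consequence of Lemma~\ref{hitth1}, i.e.\ write $[\alpha]=\delta(\xi)$ with $\xi\in\HH^0(\mathfrak{R},\mathbb{C}_{K_\mathfrak{R}+q})$ and pull back the defining sequence along the finite (hence flat) cover, using naturality of the connecting map so that $\pi^\ast[\alpha]=\delta(\pi^\ast\xi)$. Your remark that $\pi^{-1}(K_\mathfrak{R}+q)$ must be read scheme-theoretically (as $\pi^\ast(K_\mathfrak{R}+q)$ with ramification multiplicities) is the right reading of the paper's notation, and your naturality square correctly replaces the degree-vanishing argument of Lemma~\ref{hitth1}, which would not apply directly on $\widehat{\mathfrak{R}}$.
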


Now consider the following diagram

$$\xymatrix{
0\ar[r]&\mathcal{O}_\mathfrak{R}\ar[r]\ar@{=}[d]&\mathcal{O}_\mathfrak{R}(K_\mathfrak{R})\ar[r]\ar[d] &\mathbb{C}_{K_\mathfrak{R}}\ar[d]\ar[r]&0\\
0\ar[r]&\mathcal{O}_\mathfrak{R}\ar[r]&\mathcal{O}_\mathfrak{R}(K_\mathfrak{R}+q)\ar[r]
&\mathbb{C}_{K_\mathfrak{R}+q}\ar[r]& 0\\
0\ar[r]&\mathcal{O}_\mathfrak{R}\ar[r]\ar@{=}[u]
&\mathcal{O}_\mathfrak{R}(q)\ar[r]\ar[u]&\mathbb{C}_{q}\ar[r]\ar[u] &0.  }$$
This induces

$$\xymatrix{
\HH^0(\mathcal{O}_\mathfrak{R}(K_\mathfrak{R}))\ar[r]\ar[d] &\HH^0(\mathbb{C}_{K_\mathfrak{R}})\ar[r]^{\delta_{K_\mathfrak{R}}}\ar[d] &\HH^1(\mathcal{O}_\mathfrak{R})\ar[r]\ar@{=}[d]         &\mathbb{C}^1\ar[r]&0\\
\HH^0(\mathcal{O}_\mathfrak{R}(K_\mathfrak{R}+q))\ar[r]
&\HH^0(\mathbb{C}_{K_\mathfrak{R}+q})\ar[r]^{\delta_{K_\mathfrak{R}+q}}&   \HH^1(\mathcal{O}_{\mathfrak{R}})\ar[r] &0\ar[r]&0  \\
\HH^0(\mathcal{O}_\mathfrak{R}(q))\ar[r]^{\text{zero map}}\ar[u]
&\HH^0(\mathbb{C}_q)\ar[r]^{\delta_q}\ar[u]&   \HH^1(\mathcal{O}_{\mathfrak{R}})\ar@{=}[u]\ar[r] &\mathbb{C}^{g-1}\ar[r]&0.  }$$
So we see that
\begin{equation}\label{sp1}
\delta_{K_\mathfrak{R}+q}=\delta_{K_\mathfrak{R}}+\delta_{q}.
\end{equation}

\begin{corollary}\label{coclas}
There is a class $[\nu]\in\HH^0(\mathbb{C}_{K_\mathfrak{R}})$ such that
$$[\alpha]=\delta_{K_\mathfrak{R}}([\nu])+c\delta_{q}(1_{\mathbb{C}_q})$$
where $1_{\mathbb{C}_q}$ is a generator of $\HH^0(\mathbb{C}_q)\cong\mathbb{C}^1$ and $c$ is a constant.
\end{corollary}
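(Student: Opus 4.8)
The plan is to read the statement off directly from Lemma~\ref{hitth1} together with the additivity relation~\eqref{sp1}. First I would record the elementary fact that, when $q$ does not lie in the support of the canonical divisor $K_\mathfrak{R}$ (which we may assume, since the $q_i$ are fixed once and for all and $q$ is arbitrary), the skyscraper sheaf $\mathbb{C}_{K_\mathfrak{R}+q}$ splits as $\mathbb{C}_{K_\mathfrak{R}}\oplus\mathbb{C}_q$, so that
$$\HH^0(\mathfrak{R},\mathbb{C}_{K_\mathfrak{R}+q})=\HH^0(\mathfrak{R},\mathbb{C}_{K_\mathfrak{R}})\oplus\HH^0(\mathfrak{R},\mathbb{C}_q),$$
and under this decomposition the two vertical inclusions appearing in the big diagram preceding~\eqref{sp1} are precisely the two summand inclusions. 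Thus~\eqref{sp1} says exactly that the restriction of $\delta_{K_\mathfrak{R}+q}$ to the first summand is $\delta_{K_\mathfrak{R}}$ and its restriction to the second summand is $\delta_q$.

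Next I would invoke Lemma~\ref{hitth1}, which gives $\delta_{K_\mathfrak{R}+q}\big(\HH^0(\mathfrak{R},\mathbb{C}_{K_\mathfrak{R}+q})\big)=\HH^1(\mathfrak{R},\mathcal{O}_\mathfrak{R})$; in particular the extension class $[\alpha]$ attached to $\WW$ lies in this image, so we may pick $\xi\in\HH^0(\mathfrak{R},\mathbb{C}_{K_\mathfrak{R}+q})$ with $\delta_{K_\mathfrak{R}+q}(\xi)=[\alpha]$. Writing $\xi=[\nu]+c\,1_{\mathbb{C}_q}$ according to the direct sum above, with $[\nu]\in\HH^0(\mathfrak{R},\mathbb{C}_{K_\mathfrak{R}})$ and $c\in\mathbb{C}$, and applying~\eqref{sp1} termwise gives
$$[\alpha]=\delta_{K_\mathfrak{R}}([\nu])+c\,\delta_q(1_{\mathbb{C}_q}),$$
which is the assertion.

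There is no real obstacle beyond bookkeeping; the only point deserving a word of care is the hypothesis $q\notin\operatorname{supp}K_\mathfrak{R}$, which is what yields the clean splitting of $\mathbb{C}_{K_\mathfrak{R}+q}$. If one insisted on allowing $q=q_i$ for some $i$ one would instead have to track the length-two local structure of $\mathcal{O}_\mathfrak{R}(K_\mathfrak{R}+q)/\mathcal{O}_\mathfrak{R}$ at $q_i$, but since $q$ only plays an auxiliary role this case can be excluded without loss. I would also note explicitly that $[\nu]$ and $c$ are not unique (they are defined only modulo $\ker\delta_{K_\mathfrak{R}}$, with a corresponding ambiguity in $c$), which is why the corollary asserts only existence.
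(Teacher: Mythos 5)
Your argument is correct and is essentially the paper's own (implicit) proof: the corollary is deduced exactly by combining the surjectivity statement of Lemma~\ref{hitth1} with the additivity $\delta_{K_\mathfrak{R}+q}=\delta_{K_\mathfrak{R}}+\delta_{q}$ of~\eqref{sp1}, using the splitting $\HH^0(\mathbb{C}_{K_\mathfrak{R}+q})\cong\HH^0(\mathbb{C}_{K_\mathfrak{R}})\oplus\HH^0(\mathbb{C}_q)$. Your explicit remarks on the harmless assumption $q\notin\operatorname{supp}K_\mathfrak{R}$ and on the non-uniqueness of $[\nu]$ and $c$ are sensible refinements of what the paper leaves tacit, not deviations from its route.
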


We can improve Corollary~\ref{co61} for a case when $[\alpha]$ is the class of the constructed extension in Section~\ref{sec;uni}
and $\pi:\widehat{\mathfrak{R}}\to\mathfrak{R}$ is a Hitchin tangential cover.

\begin{theorem}\label{thhi1}
Let $[\alpha]\in\HH^1(\mathfrak{R},\mathcal{O}_\mathfrak{R})$ be the class of the constructed extension in Section~\ref{sec;uni}
$$0\to\mathcal{O}_\mathfrak{R}\to\WW\to\mathcal{O}_\mathfrak{R}\to0.$$
Let $\pi:\widehat{\mathfrak{R}}\to\mathfrak{R}$ be a Hitchin tangential cover. Then
$$\pi^\ast([\alpha])\in\delta\Big(\HH^0(\widehat{\mathfrak{R}},\mathbb{C}_{\pi^{-1}(K_\mathfrak{R})}\Big)
\subset \HH^1(\widehat{\mathfrak{R}},\mathcal{O}_{\widehat{\mathfrak{R}}}).$$
\end{theorem}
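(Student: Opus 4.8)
The goal is to improve Corollary~\ref{co61}, which for an arbitrary ramified cover only gives $\pi^\ast([\alpha])\in\delta(\HH^0(\widehat{\mathfrak{R}},\mathbb{C}_{\pi^{-1}(K_\mathfrak{R}+q)}))$, to the sharper statement with $\mathbb{C}_{\pi^{-1}(K_\mathfrak{R})}$ (no extra point $q$) when $[\alpha]$ is the specific extension class built in Section~\ref{sec;uni} and $\pi$ is a Hitchin tangential cover. The slack of one point $q$ on $\mathfrak{R}$ becomes a slack of the divisor $\pi^{-1}(q)$ upstairs, so this really is a nontrivial gain. My approach is to combine the splitting $\delta_{K_\mathfrak{R}+q}=\delta_{K_\mathfrak{R}}+\delta_q$ of Equation~\eqref{sp1} (and its pullback to $\widehat{\mathfrak{R}}$) with the defining property $\iota^{-1}(C_0)\subseteq\pi^{-1}(K_\mathfrak{R})$ of a Hitchin tangential cover from Definition~\ref{hitandef1}, in order to show that the $\delta_q$-component of $\pi^\ast([\alpha])$ vanishes, so that $\pi^\ast([\alpha])$ lies in the image of $\pi^\ast\delta_{K_\mathfrak{R}}=\delta\circ(\text{pullback})$ on $\HH^0(\widehat{\mathfrak{R}},\mathbb{C}_{\pi^{-1}(K_\mathfrak{R})})$.

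\textbf{Step 1: identify $[\alpha]$ concretely.} From Corollary~\ref{elco1}, the transition functions of $\WW$ are $\begin{pmatrix}1&(g_{ij}-1)z_i\\0&1\end{pmatrix}$ with $\{g_{ij}\}$ the cocycle of $\mathcal{O}_\mathfrak{R}(K_\mathfrak{R})$; hence the extension cocycle representing $[\alpha]\in\HH^1(\mathfrak{R},\mathcal{O}_\mathfrak{R})$ is $\{(g_{ij}-1)z_i\}$. I want to recognize this as the image under $\delta_{K_\mathfrak{R}}$ of an explicit principal-part-type class $[\nu]\in\HH^0(\mathbb{C}_{K_\mathfrak{R}})$, supported exactly on the canonical divisor $K_\mathfrak{R}=\sum q_i$: writing $s$ for the canonical section of $\mathcal{O}_\mathfrak{R}(K_\mathfrak{R})$ cutting out $\sum q_i$, the connecting map $\delta_{K_\mathfrak{R}}$ sends a local section of $\mathcal{O}_\mathfrak{R}(K_\mathfrak{R})/\mathcal{O}_\mathfrak{R}$ near each $q_i$ to the corresponding \v{C}ech class. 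Concretely one checks $(g_{ij}-1)z_i = z_i - z_j\cdot(z_i/z_j)\cdot\!\dots$ — more precisely, $(g_{ij}-1)z_i=z_i\,g_{ij}-z_i$, which is a coboundary-type difference $\sigma_j g_{ij}-\sigma_i$ with $\sigma_i$ the local function $z_i$, showing it is the obstruction to gluing the $z_i$'s into a global section of $\mathcal{O}_\mathfrak{R}(K_\mathfrak{R})$, i.e. precisely $\delta_{K_\mathfrak{R}}([\nu])$ for the class $[\nu]$ of $\{z_i\bmod\mathcal{O}_\mathfrak{R}\}$ supported on $K_\mathfrak{R}$. So in the notation of Corollary~\ref{coclas} the constant $c$ is $0$: $[\alpha]=\delta_{K_\mathfrak{R}}([\nu])$ with $[\nu]\in\HH^0(\mathbb{C}_{K_\mathfrak{R}})$.

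\textbf{Step 2: pull back.} Apply $\pi^\ast$. Functoriality of the connecting homomorphisms for the three-row diagram preceding Equation~\eqref{sp1}, pulled back along $\pi$, gives $\pi^\ast\delta_{K_\mathfrak{R}}=\delta^{\widehat{\mathfrak{R}}}_{\pi^{-1}(K_\mathfrak{R})}\circ\pi^\ast$ on $\HH^0$-level (using $\pi^{-1}(\mathcal{O}_\mathfrak{R}(K_\mathfrak{R}))\hookrightarrow\mathcal{O}_{\widehat{\mathfrak{R}}}(\pi^{-1}(K_\mathfrak{R}))$). Hence $\pi^\ast([\alpha])=\pi^\ast\delta_{K_\mathfrak{R}}([\nu])=\delta\big(\pi^\ast[\nu]\big)$ with $\pi^\ast[\nu]\in\HH^0(\widehat{\mathfrak{R}},\mathbb{C}_{\pi^{-1}(K_\mathfrak{R})})$, which is exactly the assertion. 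The Hitchin-tangential hypothesis enters to guarantee the slack divisor in Corollary~\ref{co61} is forced into $\pi^{-1}(K_\mathfrak{R})$ rather than into some $\pi^{-1}(q)$ with $q\notin\{q_i\}$: indeed $\iota^{-1}(C_0)\subseteq\pi^{-1}(K_\mathfrak{R})$ means the extra intersection data of $\iota(\widehat{\mathfrak{R}})$ with the normalized section $C_0$ (which carries the extension class $[\alpha]$, via $C_0.C_0=0$ and the normalization $\HH^0(\mathcal{W})\neq0$ from Section~\ref{sec;uni}) is supported over $K_\mathfrak{R}$; this is what lets us drop $q$.

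\textbf{Main obstacle.} The delicate point is Step~1 together with the role of the tangential hypothesis: showing that the constant $c$ in Corollary~\ref{coclas} genuinely vanishes for \emph{this} $[\alpha]$ and using Definition~\ref{hitandef1} to certify it. The issue is that a priori $[\alpha]=\delta_{K_\mathfrak{R}}([\nu])+c\,\delta_q(1_{\mathbb{C}_q})$ for \emph{any} chosen $q$; what must be argued is that because the section $C_0$ of $\mathfrak{S}=\mathbb{P}(\WW)$ corresponding to the trivial sub-bundle is met by $\iota(\widehat{\mathfrak{R}})$ only over $K_\mathfrak{R}$ (this is the content of $\iota^{-1}(C_0)\subseteq\pi^{-1}(K_\mathfrak{R})$), the pulled-back extension $\pi^\ast\WW$ splits off the canonical section after an elementary transformation supported precisely on $\pi^{-1}(K_\mathfrak{R})$, which in cohomological terms says $\pi^\ast([\alpha])$ dies in $\HH^1(\widehat{\mathfrak{R}},\mathcal{O}_{\widehat{\mathfrak{R}}}(\pi^{-1}(K_\mathfrak{R})))$, i.e. lies in the image of $\delta$ from $\HH^0(\widehat{\mathfrak{R}},\mathbb{C}_{\pi^{-1}(K_\mathfrak{R})})$. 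Making this translation between ``$\iota(\widehat{\mathfrak{R}})$ passes through $C_0$ only over $K_\mathfrak{R}$'' and ``the $\delta_q$-component of $\pi^\ast[\alpha]$ vanishes'' cleanly — rather than by an explicit cocycle chase that could become messy because the $q_i$ are the (assumed distinct) points of the canonical divisor — is where the argument has to be done carefully; everything else is bookkeeping with the long exact sequences already displayed before Equation~\eqref{sp1}.
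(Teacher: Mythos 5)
There is a genuine gap, and it sits exactly where you flagged it. Your Step 1 does not work as written: the class you call $[\nu]$, namely ``$\{z_i\bmod\mathcal{O}_\mathfrak{R}\}$'', is the \emph{zero} element of $\HH^0(\mathfrak{R},\mathbb{C}_{K_\mathfrak{R}})$, because the $z_i$ are holomorphic and hence die in the skyscraper quotient $\mathcal{O}_\mathfrak{R}(K_\mathfrak{R})/\mathcal{O}_\mathfrak{R}$; so the manipulation $(g_{ij}-1)z_i=z_j-z_i$ does not exhibit $[\alpha]$ as $\delta_{K_\mathfrak{R}}$ of anything. More structurally, membership $[\alpha]\in\delta_{K_\mathfrak{R}}\big(\HH^0(\mathbb{C}_{K_\mathfrak{R}})\big)$ is \emph{not} a formal cocycle fact: since $\HH^1(\mathfrak{R},\mathcal{O}_\mathfrak{R}(K_\mathfrak{R}))\cong\mathbb{C}\neq0$, it is equivalent (by the long exact sequence, or Serre duality) to the vanishing of the pairing of $[\alpha]$ against the holomorphic $1$-form whose divisor is $K_\mathfrak{R}$ — one nontrivial linear condition. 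This is precisely why Lemma~\ref{hitth1} is easy ($\HH^1(\mathcal{O}_\mathfrak{R}(K_\mathfrak{R}+q))=0$, so every class lifts) while dropping $q$ is the whole content; and indeed the paper obtains the base-level statement you want as input only \emph{after} the theorem, as Corollary~\ref{corotan1}. Note also the internal warning sign you yourself noticed: if Step 1 were correct, your Step 2 (naturality of $\delta$ under $\pi^\ast$) would prove the conclusion for \emph{every} ramified cover, making the Hitchin tangential hypothesis irrelevant.

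Your ``Main obstacle'' paragraph names the right issue but supplies no argument for it, and that missing argument is the paper's actual proof. The paper translates the defining condition $\iota^{-1}(C_0)\subseteq\pi^{-1}(K_\mathfrak{R})$, via the universal property of morphisms into $\mathbb{P}(\mathcal{W})$ (Hartshorne, p.~162), into a homomorphism $c:\pi^\ast\mathcal{W}\to\mathcal{O}_{\widehat{\mathfrak{R}}}(\pi^{-1}(K_\mathfrak{R}))$ compatible with the inclusion $s:\mathcal{O}_{\widehat{\mathfrak{R}}}\to\mathcal{O}_{\widehat{\mathfrak{R}}}(\pi^{-1}(K_\mathfrak{R}))$; this produces a map of extensions showing that the pushout of $\pi^\ast$ of the extension $[\alpha]$ along $s$ splits, i.e.\ $s^\ast\big(\pi^\ast([\alpha])\big)=0$ in $\HH^1(\mathcal{O}_{\widehat{\mathfrak{R}}}(\pi^{-1}(K_\mathfrak{R})))$, and then exactness of $\HH^0(\mathbb{C}_{\pi^{-1}(K_\mathfrak{R})})\to\HH^1(\mathcal{O}_{\widehat{\mathfrak{R}}})\to\HH^1(\mathcal{O}_{\widehat{\mathfrak{R}}}(\pi^{-1}(K_\mathfrak{R})))$ gives $\pi^\ast([\alpha])\in\operatorname{im}\delta$. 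Without constructing this bundle map (or some equivalent use of the tangential hypothesis), the reduction you attempt — ``$c=0$ in Corollary~\ref{coclas}, then pull back'' — is unsupported, so the proposal as it stands does not prove the theorem.
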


\begin{proof}
From Definition~\ref{hitandef1}, there is a morphism $\iota:\widehat{\mathfrak{R}}\to \mathfrak{S}:=\mathbb{P}(\WW)$ such that $\iota^{-1}(C_0)\subseteq\pi^{-1}(K_\mathfrak{R})$. Now it is well-known that the existence of a morphism $\iota:\widehat{\mathfrak{R}}\to \mathfrak{S}:=\mathbb{P}(\WW)$ such that $\iota^{-1}(C_0)\subseteq\pi^{-1}(K_\mathfrak{R})$
is equivalent to the existence of a surjective homomorphism $c$ in the following diagram (see \emph{p.162 in \cite{har77}}):
$$\xymatrix{&0\ar[d]&&&\\
0\ar[r] &\mathcal{O}_{\widehat{\mathfrak{R}}}\ar[r]\ar[d]^{s} &\pi^\ast\mathcal{W}\ar@{.>}[d]^{s'}\ar[r]^b\ar@{.>}[dl]|-{c}       &\mathcal{O}_{\widehat{\mathfrak{R}}}\ar@{=}[d]\ar[r]  &0\\
0\ar[r] &\mathcal{O}_{\widehat{\mathfrak{R}}}(\pi^{-1}(K_\mathfrak{R}))\ar[r]\ar[d] &\mathcal{W}'\ar[r]         &\mathcal{O}_{\widehat{\mathfrak{R}}}\ar[r]  &0\\
&\mathbb{C}_{\pi^{-1}(K_\mathfrak{R})}\ar[d] &         &&\\
&0&&&}$$
Note that the existence of $c$ implies the existence of $s'$, i.e., the commutativity of the diagram where the second horizontal short exact sequence is assumed to split.
So $s^\ast(\pi^\ast([\alpha]))=0$ in $\HH^1(\mathcal{O}_{\widehat{\mathfrak{R}}}(\pi^{-1}(K_\mathfrak{R})))$. Consequently, we deduce that
$$\pi^\ast([\alpha])\in\delta\Big(\HH^0(\widehat{\mathfrak{R}},\mathbb{C}_{\pi^{-1}(K_\mathfrak{R})}\Big)
\subset \HH^1(\widehat{\mathfrak{R}},\mathcal{O}_{\widehat{\mathfrak{R}}}).$$
\end{proof}

Immediate consequences are the following corollaries:

\begin{corollary}\label{corotan1}
For the class $[\alpha]$ of the constructed extension in Section~\ref{sec;uni}, we have
$$[\alpha]\in\delta\Big(\HH^0(\mathfrak{R},\mathbb{C}_{K_\mathfrak{R}})\Big)\subseteq \HH^1(\mathfrak{R},\mathcal{O}_{\mathfrak{R}}).$$
\end{corollary}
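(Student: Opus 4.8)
The plan is to obtain Corollary~\ref{corotan1} as the degree-one case of Theorem~\ref{thhi1}. Take for $\pi:\widehat{\mathfrak{R}}\to\mathfrak{R}$ the spectral cover of the zero Higgs field on a line bundle over $\mathfrak{R}$: then $\widehat{\mathfrak{R}}$ is the zero section of $\K_\mathfrak{R}$, so $\widehat{\mathfrak{R}}=\mathfrak{R}$, $\pi=\id_\mathfrak{R}$, $\pi^{\ast}=\id$ on $\HH^1(\mathfrak{R},\mathcal{O}_\mathfrak{R})$, and $\pi^{-1}(K_\mathfrak{R})=K_\mathfrak{R}$. It therefore suffices to check that $\id_\mathfrak{R}$ is a Hitchin tangential cover in the sense of Definition~\ref{hitandef1}, i.e.\ to exhibit a section $\iota:\mathfrak{R}\to\mathfrak{S}$ of $\pi_\mathfrak{S}$ with $\iota^{-1}(C_0)\subseteq K_\mathfrak{R}$; granting that, Theorem~\ref{thhi1} gives at once $[\alpha]=\pi^{\ast}([\alpha])\in\delta\big(\HH^0(\mathfrak{R},\mathbb{C}_{\pi^{-1}(K_\mathfrak{R})})\big)=\delta\big(\HH^0(\mathfrak{R},\mathbb{C}_{K_\mathfrak{R}})\big)$.

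To produce $\iota$ I would use the explicit description of $\mathfrak{S}$ in Section~\ref{sec;uni}: it is obtained from $\mathbb{P}^1\times\mathfrak{R}$ by the elementary transformations of Corollary~\ref{elco1}, all centred at the points $([1:0],q_i)$ and $([-1:1],q_i)$ for $i=1,\dots,2g-2$. The constant section $\{[0:1]\}\times\mathfrak{R}$ meets none of these centres, because $[0:1]$ differs from $[1:0]$ and from $[-1:1]$; hence it survives every elementary transformation and descends to a section $\iota:\mathfrak{R}\to\mathfrak{S}$. In the intermediate surface $\mathfrak{S}'=\mathbb{P}(\K_\mathfrak{R}\oplus\mathbb{C})$ this section and $C'_0$ (which maps to $C_0$) are the two disjoint ``constant'' sections, and over each $q_i$ the transformation $\elm_{p_i}$ blows up a point of the fibre lying on neither of them and then contracts the proper transform of that fibre, a curve that meets each of the two sections transversally in one point. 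Standard blow-up/blow-down bookkeeping then shows that, after all the transformations, $\iota(\mathfrak{R})$ and $C_0$, being the images of two disjoint constant sections, remain disjoint away from the fibres over $q_1,\dots,q_{2g-2}$ and meet in exactly one transverse point over each $q_i$; thus $C_0\cdot\iota(\mathfrak{R})=2g-2$ with all intersection points over $K_\mathfrak{R}=\sum_i q_i$ and, since these $2g-2$ points are distinct, of multiplicity one. Hence $\iota^{-1}(C_0)=\sum_{i=1}^{2g-2}q_i=K_\mathfrak{R}\subseteq\pi^{-1}(K_\mathfrak{R})$, which finishes the verification and the proof.

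The one step carrying content is the middle paragraph — checking that $\iota^{-1}(C_0)$ is supported precisely on $K_\mathfrak{R}$ with multiplicity one — and this is exactly where the explicit elementary-transformation model of $\mathfrak{S}$ with its two distinguished sections is used; everything else is formal once Theorem~\ref{thhi1} is available. As an alternative that avoids constructing $\iota$, one may instead apply Theorem~\ref{thhi1} to any Hitchin tangential cover $\pi':\widehat{\mathfrak{R}}'\to\mathfrak{R}$ (one exists by the dimension count of Corollary~\ref{dimco7}) and then descend along $\pi'$, using that $\pi'^{\ast}$ is injective on $\HH^1(\mathcal{O})$ and commutes with the connecting homomorphisms of $0\to\mathcal{O}_\mathfrak{R}\to\mathcal{O}_\mathfrak{R}(K_\mathfrak{R})\to\mathbb{C}_{K_\mathfrak{R}}\to0$; this is formally cleaner but relies on a later result.
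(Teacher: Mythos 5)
Your derivation is correct, and it fills in exactly the step the paper leaves unstated: the paper's own ``proof'' of Corollary~\ref{corotan1} consists of declaring it an immediate consequence of Theorem~\ref{thhi1}, without saying how that theorem (stated for a Hitchin tangential cover) yields a statement about $[\alpha]$ on the base curve itself. Your specialization to the degree-one cover $\pi=\id$, together with the explicit second section $\iota$ of $\mathfrak{S}$ obtained as the image of $\{[0:1]\}\times\mathfrak{R}$, is the natural way to instantiate the paper's claim, and your blow-up/blow-down bookkeeping (disjoint constant sections in $\mathfrak{S}'$, centers off both, intersection multiplicity one created at each contracted point, $C_0\cdot\iota(\mathfrak{R})=2g-2$ supported on $K_\mathfrak{R}$) is sound under the paper's labelling of the centers $([1:0],q_i)$ and $([-1:1],q_i)$; note also that for Definition~\ref{hitandef1} only the divisorial containment $\iota^{-1}(C_0)\subseteq K_\mathfrak{R}$ is needed, so your argument is robust even where the paper's conventions for the elementary transformations are ambiguous (away from the fibres over the $q_i$ nothing is modified, so any intersection of $\iota(\mathfrak{R})$ with $C_0$ can only occur over $K_\mathfrak{R}$). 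Two minor remarks: invoking Definition~\ref{hitandef1} and Theorem~\ref{thhi1} for the degenerate case $n=1$ is harmless, since the proof of Theorem~\ref{thhi1} uses nothing about the cover beyond the existence of $\iota$, but it would be cleaner to say you are rerunning that proof with $\pi=\id$ and your $\iota$ rather than literally citing the theorem; and your alternative route through an arbitrary Hitchin tangential cover is viable (injectivity of $\pi^{\ast}$ on $\HH^1$ of coherent sheaves follows from the trace splitting in characteristic zero) but, as you correctly flag, it would rest on the later nonemptiness statement of Corollary~\ref{dimco7}, so the identity-cover argument is logically preferable.
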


\begin{corollary}\label{corotan2}
For a given Hitchin tangential cover $\pi:\widehat{\mathfrak{R}}\to\mathfrak{R}$ and the constructed extension $[\alpha]$ in Section~\ref{sec;uni} , there exists $\rho_\pi\in\HH^0(\widehat{\mathfrak{R}},\mathbb{C}_{\pi^{-1}(K_\mathfrak{R})})$ such that
\begin{equation}\label{tanhit}
\delta\rho_\pi=\pi^\ast([\alpha]).
\end{equation}

\end{corollary}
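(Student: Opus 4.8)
The plan is to read Corollary~\ref{corotan2} as an essentially immediate consequence of the two preceding results, so the proof will be very short and will consist mainly in assembling the pieces. First I would invoke Theorem~\ref{thhi1}: since $\pi:\widehat{\mathfrak{R}}\to\mathfrak{R}$ is assumed to be a Hitchin tangential cover and $[\alpha]\in\HH^1(\mathfrak{R},\mathcal{O}_\mathfrak{R})$ is precisely the class of the extension $0\to\mathcal{O}_\mathfrak{R}\to\WW\to\mathcal{O}_\mathfrak{R}\to0$ constructed in Section~\ref{sec;uni}, the theorem gives the inclusion $\pi^\ast([\alpha])\in\delta\bigl(\HH^0(\widehat{\mathfrak{R}},\mathbb{C}_{\pi^{-1}(K_\mathfrak{R})})\bigr)$, where $\delta$ is the connecting homomorphism attached to the short exact sequence $0\to\mathcal{O}_{\widehat{\mathfrak{R}}}\to\mathcal{O}_{\widehat{\mathfrak{R}}}(\pi^{-1}(K_\mathfrak{R}))\to\mathbb{C}_{\pi^{-1}(K_\mathfrak{R})}\to0$ pulled back along $\pi$.

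The second and final step is just to unwind what ``belongs to the image of $\delta$'' means: there exists an element $\rho_\pi\in\HH^0(\widehat{\mathfrak{R}},\mathbb{C}_{\pi^{-1}(K_\mathfrak{R})})$ with $\delta\rho_\pi=\pi^\ast([\alpha])$, which is exactly equation~\eqref{tanhit}. Thus the proof reduces to the single sentence ``apply Theorem~\ref{thhi1} and choose a preimage under $\delta$.'' For good measure I would also note the degenerate/base case: Corollary~\ref{corotan1} (itself the content of Theorem~\ref{thhi1} for the identity cover, or of Lemma~\ref{hitth1} specialized to $q$ absorbed into $K_\mathfrak{R}$) already records the analogous statement downstairs on $\mathfrak{R}$, and pulling that back via $\pi^\ast$ and using $\pi^\ast\circ\delta_{K_\mathfrak{R}}=\delta\circ\pi^\ast$ gives an alternative derivation; but the cleanest route is the direct appeal to Theorem~\ref{thhi1}.

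Since there is essentially no computation, I do not expect a genuine obstacle; the only point requiring a word of care is the compatibility of the connecting homomorphisms with pullback, i.e.\ that the sequence $0\to\mathcal{O}_{\widehat{\mathfrak{R}}}\to\mathcal{O}_{\widehat{\mathfrak{R}}}(\pi^{-1}(K_\mathfrak{R}))\to\mathbb{C}_{\pi^{-1}(K_\mathfrak{R})}\to0$ really is (up to the identification of $\pi^{-1}(K_\mathfrak{R})$ as an effective divisor on $\widehat{\mathfrak{R}}$) the pullback of $0\to\mathcal{O}_{\mathfrak{R}}\to\mathcal{O}_{\mathfrak{R}}(K_\mathfrak{R})\to\mathbb{C}_{K_\mathfrak{R}}\to0$, so that the $\delta$ appearing in Theorem~\ref{thhi1} and the $\delta$ in the statement of the corollary coincide. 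This is the same $\delta$ used throughout Section~\ref{sechit}, so I would simply remark that the notation is consistent and that $\rho_\pi$ exists by exactness of the long exact cohomology sequence, and conclude. In short: the proof is ``Immediate from Theorem~\ref{thhi1}: take $\rho_\pi$ to be any element of $\HH^0(\widehat{\mathfrak{R}},\mathbb{C}_{\pi^{-1}(K_\mathfrak{R})})$ with $\delta\rho_\pi=\pi^\ast([\alpha])$, which exists precisely because $\pi^\ast([\alpha])$ lies in the image of $\delta$.''
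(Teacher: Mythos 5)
Your proposal is correct and matches the paper exactly: the paper states Corollary~\ref{corotan2} as an ``immediate consequence'' of Theorem~\ref{thhi1}, which is precisely your one-line argument of choosing a preimage of $\pi^\ast([\alpha])$ under $\delta$. Your extra remark on the compatibility of the connecting homomorphism with pullback is a harmless (and reasonable) notational check, not a deviation in method.
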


The characterization of  $\iota(\widehat{\mathfrak{R}})$ in $|nC_0+nK_\mathfrak{R}f|$ among divisors $D\in|nC_0+nK_\mathfrak{R}f|$ is, by the construction, that $\iota(\widehat{\mathfrak{R}})$ passes through each point of $\sigma(K_\mathfrak{R})$ with multiplicity $n$ for  $\widehat{\mathfrak{R}}\in|nC'_0+nK_\mathfrak{R}f'|$ where $\sigma:\mathfrak{R}\to C_0$ is the section. Hence $\iota(\widehat{\mathfrak{R}})$ becomes a singular curve in $\mathfrak{S}$. In particular \emph{the degree of singularities} $\delta_{\sigma(q_i)}$ is $\frac{n(n-1)}{2}$. If the singularity of multiplicity $n$ at each point in $\sigma(K_\mathfrak{R})$ is an ordinary singularity of multiplicity $n$, then the desingularization of the singularities is nothing but the blowing up of $\mathfrak{S}$ at $\sigma(K_\mathfrak{R})$. Let $\bl:\widetilde{\mathfrak{S}}\to\mathfrak{S}$ be the blow-up of $\mathfrak{S}$ at $\sigma(K_\mathfrak{R})$. Let $\widetilde{\iota(\widehat{\mathfrak{R}})}$ is the strict transformation of $\iota(\widehat{\mathfrak{R}})$ with respect to $\bl$. Then the genus $\widetilde{\widehat{g}}$ of $\widetilde{\iota(\widehat{\mathfrak{R}})}$ is given by
$$\widetilde{\widehat{g}}=(2n^2-n)(g-1)+1-\frac{n(n-1)}{2}(2g-2)=n^2(g-1)+1.$$
Let
$$L_{n,n,n}:=\bl^\ast(nC_0+nK_\mathfrak{R}f)-n\sum_{i=1}^{2g-2}E_i.$$
Clearly, $\widetilde{\iota(\widehat{\mathfrak{R}})}\in|L_{n,n,n}|$. By the Bertini theorem, a generic divisor in $|L_{n,n,n}|$ is smooth. Hence, we conclude that $\iota(\widehat{\mathfrak{R}})$ has an ordinary singularity of multiplicity $n$ for a generic Hitchin spectral curve $\pi:\widehat{\mathfrak{R}}\to \mathfrak{R}$.

One can observe that the above would be the typical procedure: Take a non-compact complex surface and consider a linear system of smooth divisors. If one wants to study a moduli space consisting of divisors with a particular property, then construct another compact complex surface where the singular divisors naturally form a sub-linear system of some linear system.

\begin{lemma}\label{lemma2}
For $n>2$, we have
$$\begin{aligned}
\dim\HH^1(\mathcal{O}_{\widetilde{\mathfrak{S}}}(L_{n,n,n}))&=0\\
\dim\HH^2(\mathcal{O}_{\widetilde{\mathfrak{S}}}(L_{n,n,n}))&=0.
\end{aligned}$$
\end{lemma}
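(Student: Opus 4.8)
The plan is to compute the two higher cohomology groups of $\mathcal{O}_{\widetilde{\mathfrak{S}}}(L_{n,n,n})$ by pushing everything down to $\mathfrak{S}$ (and then to $\mathfrak{S}'$ and finally to $\mathfrak{R}$), so that all computations reduce to line-bundle cohomology on the curve $\mathfrak{R}$, where Riemann--Roch and vanishing for bundles of large degree make everything explicit. First I would record the relationship between $L_{n,n,n}$ on $\widetilde{\mathfrak{S}}$ and $nC_0 + nK_\mathfrak{R}f$ on $\mathfrak{S}$: since $\widetilde{\mathfrak{S}}\to\mathfrak{S}$ is the blow-up of $\mathfrak{S}$ at the $2g-2$ points $\sigma(q_i)$ with exceptional divisors $E_i$, and $L_{n,n,n}=\bl^\ast(nC_0+nK_\mathfrak{R}f)-n\sum E_i$, the projection formula together with $R^j\bl_\ast\mathcal{O}_{\widetilde{\mathfrak{S}}}(-nE_i)$ (which vanishes for $j>0$ and equals the ideal sheaf $\mathcal{I}_{\sigma(q_i)}^{\,n}$ for $j=0$ when $n>0$) gives
$$\HH^i(\mathcal{O}_{\widetilde{\mathfrak{S}}}(L_{n,n,n}))=\HH^i\big(\mathfrak{S},\mathcal{O}_{\mathfrak{S}}(nC_0+nK_\mathfrak{R}f)\otimes\mathcal{I}_{\sigma(K_\mathfrak{R})}^{\,n}\big),$$
using that the higher direct images vanish and the Leray spectral sequence degenerates.

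Next I would analyze $\mathcal{O}_{\mathfrak{S}}(nC_0+nK_\mathfrak{R}f)$ itself. By Lemma~\ref{mainth3} and the computations preceding Lemma~\ref{lemma2}, the linear system $|nC_0+nK_\mathfrak{R}f|$ already contains the singular curves $\iota(\widehat{\mathfrak{R}})$ whose strict transforms live in $|L_{n,n,n}|$; more usefully, I would relate $\mathcal{O}_{\mathfrak{S}}(nC_0+nK_\mathfrak{R}f)$ to $\mathcal{O}_{\mathfrak{S}'}(nC'_0+nK_\mathfrak{R}f')$ via the elementary transformation $\elm:\mathfrak{S}'\to\mathfrak{S}$ of Section~\ref{sechit}, under which $\elm(nC'_0+nK_\mathfrak{R}f')=nC_0+nK_\mathfrak{R}f$. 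The effect of the $2g-2$ elementary transformations at the points $p'_i=([-1,1],q_i)$ on the cohomology of $nC'_0+nK_\mathfrak{R}f'$ is what introduces precisely the vanishing conditions at $\sigma(K_\mathfrak{R})$; tracking these carefully (via the standard short exact sequence relating a sheaf and its elementary transform, applied $2g-2$ times) reduces the problem to the already-computed cohomology of $\mathcal{O}_{\mathfrak{S}'}(nC'_0+nK_\mathfrak{R}f')$ from Lemma~\ref{lemma1} and Corollary~\ref{codim1}. Alternatively, and perhaps more cleanly, I would push $\mathcal{O}_{\mathfrak{S}}(nC_0+nK_\mathfrak{R}f)\otimes\mathcal{I}_{\sigma(K_\mathfrak{R})}^{\,n}$ directly down along $\pi_{\mathfrak{S}}:\mathfrak{S}\to\mathfrak{R}$: since $C_0.qf=1$ and the fibers are $\mathbb{P}^1$, the derived pushforward of $\mathcal{O}_{\mathfrak{S}}(nC_0)$ is a vector bundle of rank $n+1$ on $\mathfrak{R}$ built from $\WW$ (in the spirit of Lemma~\ref{lemma1}), the twist by $nK_\mathfrak{R}f$ tensors by $nK_\mathfrak{R}$, and imposing the $n$-fold vanishing at each $q_i$ modifies this bundle by a controlled elementary transformation at each $q_i$. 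In all cases $R^2=0$ for dimension reasons (fibers are curves), so $\HH^2(\mathcal{O}_{\widetilde{\mathfrak{S}}}(L_{n,n,n}))=\HH^2$ of a sheaf on $\mathfrak{R}$, which is $0$; and $\HH^1$ becomes $\HH^1$ of an explicit bundle on $\mathfrak{R}$ of every summand of degree $>2g-2$, hence $0$ by Serre duality, using the hypothesis $n>2$ to guarantee the degree bound.

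The main obstacle will be the bookkeeping in the descent along the $2g-2$ elementary transformations (equivalently, identifying the rank-$(n+1)$ pushforward bundle on $\mathfrak{R}$ together with the $n$-fold vanishing conditions at each $q_i$): one must verify that each elementary transformation, while lowering degrees, never drops any summand's degree to $2g-2$ or below when $n>2$, and one must confirm that the connecting maps in the relevant long exact sequences are surjective (so that no spurious $\HH^1$ survives) — this is where the precise coefficient $n$ in $L_{n,n,n}=\bl^\ast(nC_0+nK_\mathfrak{R}f)-n\sum E_i$ and the genericity/ordinary-singularity discussion preceding the lemma are used. Once the pushforward bundle is pinned down as a direct sum (or successive extension) of line bundles of the form $\mathcal{O}_\mathfrak{R}(mK_\mathfrak{R}+\text{corrections})$ with all degrees $>2g-2$, the vanishing is immediate from Riemann--Roch on $\mathfrak{R}$, and I would assemble these pieces to conclude both displayed equalities.
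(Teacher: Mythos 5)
Your opening reduction is sound: since $\bl_\ast\mathcal{O}_{\widetilde{\mathfrak{S}}}(-n\sum E_i)=\mathcal{I}_{\sigma(K_\mathfrak{R})}^{\,n}$ and the higher direct images vanish, Leray gives $\HH^i(\mathcal{O}_{\widetilde{\mathfrak{S}}}(L_{n,n,n}))\cong\HH^i\big(\mathfrak{S},\mathcal{O}_{\mathfrak{S}}(nC_0+nK_\mathfrak{R}f)\otimes\mathcal{I}_{\sigma(K_\mathfrak{R})}^{\,n}\big)$, and the $\HH^2$ statement does follow easily along your lines (the quotient by the ideal is a skyscraper and $R^1\pi_{\mathfrak{S}\ast}\mathcal{O}_\mathfrak{S}(nC_0+nK_\mathfrak{R}f)=0$). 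The genuine gap is in the $\HH^1$ statement, and it sits exactly where you wave at ``bookkeeping.'' If you push $\mathcal{O}_{\mathfrak{S}}(nC_0+nK_\mathfrak{R}f)\otimes\mathcal{I}^{\,n}$ down to $\mathfrak{R}$ and filter by order of vanishing along $C_0$, the condition of an $n$-fold point at $\sigma(q_i)$ forces the coefficient in fiber-degree $j$ to vanish to order $n-j$ at $q_i$; summing over the canonical divisor $K_\mathfrak{R}=\sum q_i$, the graded pieces of the resulting sheaf are $\mathcal{O}_\mathfrak{R}(nK_\mathfrak{R}-(n-j)K_\mathfrak{R})=\mathcal{O}_\mathfrak{R}(jK_\mathfrak{R})$ for $j=0,\dots,n$ (check: $\sum_j\chi(\mathcal{O}_\mathfrak{R}(jK_\mathfrak{R}))=(n^2-1)(g-1)$, matching Theorem~\ref{thdim}). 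So your premise that all pieces have degree $>2g-2$ is false: the pieces $j=0$ and $j=1$ contribute $\dim\HH^1=g$ and $1$, i.e.\ exactly the $g+1$ of Corollary~\ref{codim1}. Termwise Riemann--Roch therefore yields only $\dim\HH^1\le g+1$, not $0$, and the vanishing (if it holds) must come from showing that the connecting maps attached to the nontrivial extension class $[\alpha]$ of $\WW$ --- equivalently, that the $n$-fold points at $\sigma(K_\mathfrak{R})$ impose independent conditions on $|nC_0+nK_\mathfrak{R}f|$ --- kill these classes. That surjectivity is not ``bookkeeping'': it is precisely the content of the lemma, and neither the coefficient $n$ in $L_{n,n,n}$ nor the Bertini/ordinary-singularity discussion preceding the lemma supplies it; any honest completion of your route has to use the specific transition functions $(g_{ij}-1)z_i$ tying $\WW$ to $K_\mathfrak{R}$, which your sketch never invokes.

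Note also that this is not the paper's argument at all: the paper writes $L_{n,n,n}=D+K_{\widetilde{\mathfrak{S}}}$ with $D=\bl^\ast((n+2)C_0+(n-1)K_\mathfrak{R}f)-(n+1)\sum E_i$, argues $D$ is ample by Nakai--Moishezon for $n>2$, and quotes Kodaira vanishing, with no pushforward analysis. So even setting aside the comparison, your proposal as written reduces the lemma to an unproved surjectivity equivalent to the lemma itself, and the one concrete mechanism you offer for finishing (all pushforward degrees exceeding $2g-2$) is not true.
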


\begin{proof}
Note the facts that $E_i.E_j=-\delta_{ij},\bl^\ast(D_1).E_i=0$, and $\bl^\ast(D_1).\bl^\ast(D_2)=D_1.D_2$ for $D_1,D_2\in\pic(\mathfrak{S})$ where $\delta_{ij}$ is the Kronecker symbol. Let 
$$D=\bl^\ast((n+2)C_0+(n-1)K_\mathfrak{R})-(n+1)\sum_{i=1}^{2g-2}E_i.$$
Clearly, $D^2=(n^2-5)(2g-2)>0$ for $n>2$. Also $D.C_0>0$, $D.qf>0$, and $D.E_i>0$ for the generators of $\pic(\widetilde{\mathfrak{S}})$ where $q\in\mathfrak{R}$. Hence from the Nakai-Moishezon criterion (\emph{p.365} in \cite{har77}), we see that $D$ is an ample divisor. Note that the canonical divisor of $\widetilde{\mathfrak{S}}$ is
$$\K_ {\widetilde{\mathfrak{S}}}=\bl^\ast(\K_\mathfrak{S})+\sum_{i=1}^{2g-2}E_i=\bl^\ast(-2C_0+K_\mathfrak{R}f)+\sum_{i=1}^{2g-2}E_i.$$
By the Kodaira vanishing theorem (\emph{p.248} in \cite{har77}), we have for $i>0$,
$$\begin{aligned}
0&=\HH^i(\mathcal{O}_{\widetilde{\mathfrak{S}}}(D+K_{\widetilde{\mathfrak{S}}}))\\
&=\HH^i(\mathcal{O}_{\widetilde{\mathfrak{S}}}(\bl^\ast(nC_0+nK_\mathfrak{R})-n\sum_{i=1}^{2g-2}E_i))\\
&=\HH^i(\mathcal{O}_{\widetilde{\mathfrak{S}}}(L_{n,n,n})).
\end{aligned}$$
\end{proof}

From the above lemma, we have the following theorem.

\begin{theorem}\label{thdim}
Let $n>2$. The dimension of a linear system $|L_{n,n,n}|$ on $\widetilde{\mathfrak{S}}$ is
$$\dim|L_{n,n,n}|=(n^2-1)(g-1)-1.$$
\end{theorem}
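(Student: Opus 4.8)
The plan is to compute $\dim|L_{n,n,n}| = \dim\HH^0(\mathcal{O}_{\widetilde{\mathfrak{S}}}(L_{n,n,n})) - 1$ via the Riemann--Roch theorem on the surface $\widetilde{\mathfrak{S}}$, using Lemma~\ref{lemma2} to kill the higher cohomology. By Lemma~\ref{lemma2}, for $n > 2$ we have $\dim\HH^1(\mathcal{O}_{\widetilde{\mathfrak{S}}}(L_{n,n,n})) = \dim\HH^2(\mathcal{O}_{\widetilde{\mathfrak{S}}}(L_{n,n,n})) = 0$, so Riemann--Roch on the surface gives
$$\dim\HH^0(\mathcal{O}_{\widetilde{\mathfrak{S}}}(L_{n,n,n})) = \frac{L_{n,n,n}.(L_{n,n,n} - \K_{\widetilde{\mathfrak{S}}})}{2} + \chi(\mathcal{O}_{\widetilde{\mathfrak{S}}}).$$

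The first step is to record the intersection-theoretic data on $\widetilde{\mathfrak{S}}$ already assembled in the proof of Lemma~\ref{lemma2}: $C_0.C_0 = qf.qf = 0$, $C_0.qf = 1$, $E_i.E_j = -\delta_{ij}$, $\bl^\ast(D).E_i = 0$, and $\bl^\ast(D_1).\bl^\ast(D_2) = D_1.D_2$, together with $\K_{\widetilde{\mathfrak{S}}} = \bl^\ast(-2C_0 + K_\mathfrak{R}f) + \sum_{i=1}^{2g-2} E_i$. From $L_{n,n,n} = \bl^\ast(nC_0 + nK_\mathfrak{R}f) - n\sum E_i$ one then computes $L_{n,n,n}^2$ and $L_{n,n,n}.\K_{\widetilde{\mathfrak{S}}}$. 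Since $\deg K_\mathfrak{R} = 2g-2$, the pullback part contributes $(nC_0 + n(2g-2)f)^2 = 2n^2(2g-2)$ to $L_{n,n,n}^2$, and the exceptional part contributes $-n^2(2g-2)$, giving $L_{n,n,n}^2 = n^2(2g-2)$. Similarly $L_{n,n,n}.\K_{\widetilde{\mathfrak{S}}}$ decomposes into $(nC_0 + n(2g-2)f).(-2C_0 + (2g-2)f) = n(2g-2) - 2n(2g-2) = -n(2g-2)$ from the pullback pairing plus $(-n)\cdot\sum E_i.(\sum E_i) \cdot(\text{sign})$; carefully, $(-n\sum E_i).(\sum E_i) = -n\cdot(-(2g-2)) = n(2g-2)$, so $L_{n,n,n}.\K_{\widetilde{\mathfrak{S}}} = -n(2g-2) + n(2g-2) = 0$... one must track signs here, as the exceptional-divisor contribution is where arithmetic slips most easily.

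Next I would compute $\chi(\mathcal{O}_{\widetilde{\mathfrak{S}}})$. Since $\widetilde{\mathfrak{S}}$ is the blow-up of the ruled surface $\mathfrak{S}$ over $\mathfrak{R}$ (genus $g$) at $2g-2$ points, and blowing up a point does not change $\chi(\mathcal{O})$, we have $\chi(\mathcal{O}_{\widetilde{\mathfrak{S}}}) = \chi(\mathcal{O}_{\mathfrak{S}}) = 1 - g$ (the ruled surface $\mathfrak{S} = \mathbb{P}(\WW) \to \mathfrak{R}$ has $\chi(\mathcal{O}) = \chi(\mathcal{O}_\mathfrak{R}) = 1-g$ by Leray, since $\pi_\ast\mathcal{O}_{\mathfrak{S}} = \mathcal{O}_\mathfrak{R}$ and $R^1\pi_\ast\mathcal{O}_{\mathfrak{S}} = 0$). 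Substituting the pieces $L_{n,n,n}^2 = n^2(2g-2)$, $L_{n,n,n}.\K_{\widetilde{\mathfrak{S}}} = 0$, and $\chi(\mathcal{O}_{\widetilde{\mathfrak{S}}}) = 1-g$ into Riemann--Roch yields $\dim\HH^0 = \tfrac{1}{2}n^2(2g-2) + (1-g) = n^2(g-1) - (g-1) = (n^2-1)(g-1)$, and hence $\dim|L_{n,n,n}| = (n^2-1)(g-1) - 1$, as claimed.

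The main obstacle is purely bookkeeping: getting the sign and coefficient of the exceptional-divisor terms right in both $L_{n,n,n}^2$ and $L_{n,n,n}.\K_{\widetilde{\mathfrak{S}}}$. A secondary (and cheap) point is justifying $\chi(\mathcal{O}_{\widetilde{\mathfrak{S}}}) = 1 - g$ — this follows either from Leray for the $\mathbb{P}^1$-bundle $\mathfrak{S} \to \mathfrak{R}$ plus blow-up invariance of $\chi(\mathcal{O})$, or directly from Noether's formula combined with the topological Euler-characteristic count $e(\widetilde{\mathfrak{S}}) = e(\mathfrak{S}) + (2g-2) = 4(1-g) + (2g-2)$. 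I would present the computation compactly, relying on Lemma~\ref{lemma2} for the vanishing and on the intersection numbers collected in its proof, so that the whole argument reduces to one application of Riemann--Roch.
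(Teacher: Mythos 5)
Your proposal is correct and follows essentially the same route as the paper: invoke Lemma~\ref{lemma2} to kill $\HH^1$ and $\HH^2$, use blow-up invariance to get $\chi(\mathcal{O}_{\widetilde{\mathfrak{S}}})=1-g$ (the paper phrases this as $p_a=-g$), and apply Riemann--Roch; your intersection numbers $L_{n,n,n}^2=n^2(2g-2)$ and $L_{n,n,n}.\K_{\widetilde{\mathfrak{S}}}=0$ agree with the paper's computation of $L_{n,n,n}.(L_{n,n,n}-\K_{\widetilde{\mathfrak{S}}})$ and yield the same value $(n^2-1)(g-1)-1$.
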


\begin{proof}
Note that since $p_a$ is preserved under a blow-up, we have
$$p_a(\mathcal{O}_{\widetilde{\mathfrak{S}}})=p_a(\mathcal{O}_{\mathfrak{S}})=-g.$$
By Lemma~\ref{lemma2}, $\dim\HH^2(\mathcal{O}_{\widetilde{\mathfrak{S}}}(L_{n,n,n}))=0$ and $\dim\HH^1(\mathcal{O}_{\widetilde{\mathfrak{S}}}(L_{n,n,n}))=0$. By the Riemann-Roch theorem, we see that
$$\begin{aligned}
\dim\HH^0(\mathcal{O}_{\widetilde{\mathfrak{S}}}(L_{n,n,n}))&=\frac{L_{n,n,n}.(L_{n,n,n}-K_{\widetilde{\mathfrak{S}}})}{2}+(1-g)\\
&=\Big(n(n-1)+n(n+2)-n(n+1)-1\Big)(g-1)\\
&=(n^2-1)(g-1).
\end{aligned}$$

\end{proof}

\begin{corollary}\label{dimco7}
For $n>2$, $\dim\mathscr{HT}(n,g,\mathfrak{S})=(n^2-1)(g-1)-1$.
\end{corollary}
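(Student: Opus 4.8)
The plan is to identify the linear system $\mathscr{HT}(n,g,\mathfrak{S})$ with (an open dense subset of) the linear system $|L_{n,n,n}|$ on $\widetilde{\mathfrak{S}}$ and then simply invoke Theorem~\ref{thdim}. Concretely, by Lemma~\ref{lemmatan} every Hitchin tangential cover $\pi:\widehat{\mathfrak{R}}\to\mathfrak{R}$ gives rise to a divisor $\iota(\widehat{\mathfrak{R}})\in|nC_0+nK_\mathfrak{R}f|$, and by the discussion preceding Lemma~\ref{lemma2} this divisor passes through each of the $2g-2$ points $\sigma(q_i)$ with multiplicity $n$; conversely a generic such divisor has exactly ordinary multiplicity-$n$ singularities there, so after blowing up via $\bl:\widetilde{\mathfrak{S}}\to\mathfrak{S}$ its strict transform lies in $|L_{n,n,n}|$ and is smooth of genus $n^2(g-1)+1$, which is the genus of a Hitchin spectral curve. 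First I would make precise that the assignment $\widehat{\mathfrak{R}}\mapsto\widetilde{\iota(\widehat{\mathfrak{R}})}$ sets up a bijection between $\mathscr{HT}(n,g,\mathfrak{S})$ and the sublinear system of $|L_{n,n,n}|$ cut out by the (vacuous, for the blow-up model) condition of passing through $\sigma(K_\mathfrak{R})$ with multiplicity $n$ — i.e., essentially all of $|L_{n,n,n}|$ — together with the polynomiality-of-degree-$n$-in-$k_\mathfrak{S}$ constraint from Remark~\ref{remark1}(i), which is automatic for divisors in $|nC_0+nK_\mathfrak{R}f|$ since $C_0.qf=1$ forces a degree-$n$ equation in the fiber coordinate.

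The key steps, in order, are: (1) recall from Lemma~\ref{lemmatan} and the paragraph after Corollary~\ref{corotan2} that $\iota(\widehat{\mathfrak{R}})\in|nC_0+nK_\mathfrak{R}f|$ always has multiplicity $\geq n$ at each $\sigma(q_i)$, so its strict transform is a genuine member of $|L_{n,n,n}|$; (2) use the Bertini argument already given (the genus count $\widetilde{\widehat{g}}=n^2(g-1)+1$ matching the Hitchin genus, plus smoothness of the generic member of $|L_{n,n,n}|$) to see that a generic member of $|L_{n,n,n}|$, when pushed down to $\mathfrak{S}$ and then into $\mathfrak{S}'$ and $\K_\mathfrak{R}$, is the spectral curve of an honest Higgs field of rank $n$, hence arises from a Hitchin tangential cover; (3) conclude $\mathscr{HT}(n,g,\mathfrak{S})$ and $|L_{n,n,n}|$ coincide as projective spaces (the map being the strict-transform/blow-down correspondence, which is a linear isomorphism since blowing up at fixed base points of prescribed multiplicity is a linear operation on the relevant spaces of sections); (4) apply Theorem~\ref{thdim} to get $\dim\mathscr{HT}(n,g,\mathfrak{S})=\dim|L_{n,n,n}|=(n^2-1)(g-1)-1$.

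I expect the main obstacle to be step (2)–(3): showing that \emph{every} (or at least a dense family of) smooth member of $|L_{n,n,n}|$ genuinely comes from a Hitchin spectral cover, rather than merely being an abstract curve in the surface. One must check that the defining polynomial $P_{\iota(\widehat{\mathfrak{R}})}(k_\mathfrak{S},z)$ of such a divisor, when transported back to $\K_\mathfrak{R}$ via the inverse elementary transformations, has the form $\det(\chi\cdot\id-\phi(z))$ for a global Higgs field $\phi\in\HH^0(\mathfrak{R},\End\E\otimes\K_\mathfrak{R})$ — equivalently, that the coefficient of $\chi^{n-i}$ is a section of $\K_\mathfrak{R}^i$ with no spurious poles. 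The elementary transformations at the $p'_i$ and $p_i$ were precisely designed (Corollary~\ref{elco1}) so that the fiber coordinate $k_\mathfrak{S}$ relates to the tautological section $\chi$ in a way that converts the multiplicity-$n$ tangency along $\sigma(K_\mathfrak{R})$ into exactly the vanishing needed for the coefficients to be holomorphic canonical-power sections; this is the content that makes the dimension match the classical Hitchin count $n^2(g-1)+1$ up to the single constraint lowering it to $(n^2-1)(g-1)-1$, and verifying it rigorously is where the real work lies. The remaining steps are then formal consequences of the cohomology computations in Lemma~\ref{lemma2} and Theorem~\ref{thdim}.
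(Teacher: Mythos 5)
Your proposal follows essentially the same route as the paper: Corollary~\ref{dimco7} is obtained directly from Theorem~\ref{thdim} by identifying $\mathscr{HT}(n,g,\mathfrak{S})$ with the sub-linear system of $|nC_0+nK_\mathfrak{R}f|$ of divisors with multiplicity $n$ along $\sigma(K_\mathfrak{R})$, whose strict transforms under $\bl$ form $|L_{n,n,n}|$, exactly as in the discussion preceding Lemma~\ref{lemma2}. The converse point you flag as the main obstacle (that a generic member of $|L_{n,n,n}|$ really arises from a Hitchin spectral cover) is likewise left implicit in the paper, so your argument is the paper's argument, stated with that caveat made explicit.
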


\section{Hitchin tangential functions and the generalization of tangency}\label{secpol}

From the definition of the skyscraper sheaf $\mathbb{C}_D$ where $D$ is a divisor on $\mathfrak{R}$, we may regard an element of $\HH^0(\mathfrak{R},\mathbb{C}_{D})$ as the Laurent tail of a local function defined on the support of $D$ on $\mathfrak{R}$. That is, there is a correspondence between  a vector $\big((c_{1,1},\dots,c_{1,m_1}),\dots,(c_{d,1},\dots,c_{d,m_d})\big)\in\mathbb{C}^{\sum_{k=1}^{d}m_k}\cong\HH^0(\mathfrak{R},\mathbb{C}_D)$ where $D=\sum_{k=1}^{d}m_kp_k$ and a local function $\lambda$ with a Laurent tail
$$\lambda(z_k)=\frac{c_{k,m_k}}{z_{k}^{m_k}}+\cdots+\frac{c_{k,1}}{z_{k}}$$
in the neighborhood of $p_k$ in the support of $D$. In this vein, we can formulate the following:

\begin{definition}
We call the set of the Laurent tails of a local function $\lambda$ at a divisor $D$ a \emph{residue section} of $\lambda$ associated with a divisor $D$ and denote it by
$$\rho(\lambda)\in \HH^0(\mathfrak{R},\mathbb{C}_{D}).$$
Conversely, we assign a class $\rho\in \HH^0(\mathfrak{R},\mathbb{C}_{D})$ a local function denoted by $\lambda_\rho$. In particular, we have
$$\rho(\lambda_\rho)=\rho.$$
\end{definition}

By Corollary~\ref{corotan1}, we know that there is $\rho\in \HH^0(\mathfrak{R},\mathbb{C}_{K_\mathfrak{R}})$ such that
$$\pi^\ast\Big(\delta(\rho)\Big)=\pi^\ast([\alpha])\in\delta\Big(\HH^0(\widehat{\mathfrak{R}},\mathbb{C}_{\pi^{-1}(K_\mathfrak{R})})\Big)
\subset \HH^1(\widehat{\mathfrak{R}},\mathcal{O}_{\widehat{\mathfrak{R}}}).$$
From now on we let $\lambda_\rho$ be a local function corresponding to $\rho$.

\begin{theorem}\label{thtan1}
Let $\pi:\widehat{\mathfrak{R}}\to \mathfrak{R}$ be a Hitchin tangential cover of degree $n$.
There is a meromorphic function $k$ on $\widehat{\mathfrak{R}}$ such that the poles of $k+\pi^\ast(\lambda_\rho)$ is $\iota^{-1}(C_0)\subseteq\pi^{-1}(K_\mathfrak{R})$.
\end{theorem}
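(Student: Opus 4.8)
The plan is to build the function $k$ directly from the geometry of the elementary transformation $\elm:\mathfrak{S}'\to\mathfrak{S}$ and then correct it by $\pi^\ast(\lambda_\rho)$ so that the remaining poles are exactly concentrated on $\iota^{-1}(C_0)$. First I would recall from Lemma~\ref{lemmatan} that the Hitchin tangential cover carries the meromorphic function $k_\mathfrak{S}\circ\iota$, whose pole divisor on $\widehat{\mathfrak{R}}$ is supported on $\iota^{-1}(C_0+K_\mathfrak{R}f)$; concretely, $k_\mathfrak{S}$ is the pushforward to $\mathbb{P}^1$ of the coordinate on $\mathbb{P}^1\times\mathfrak{R}$ composed with the inverse of the chain of elementary transformations, so near each point $q_i\in K_\mathfrak{R}$ the transition formula of Corollary~\ref{elco1}, namely $G_{ij}(q)=\left[\begin{smallmatrix}1&(g_{ij}(q)-1)z_i(q)\\0&1\end{smallmatrix}\right]$, tells us precisely the affine-linear shear that $k_\mathfrak{S}$ undergoes when crossing the fibre over $q_i$. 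This shear is governed by the cocycle $(g_{ij}-1)z_i$ representing $[\alpha]\in\HH^1(\mathfrak{R},\mathcal{O}_\mathfrak{R})$.

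Next I would make the identification between the pole behaviour of $k_\mathfrak{S}\circ\iota$ along the fibres over $K_\mathfrak{R}$ and the class $\pi^\ast([\alpha])$. The point is that $k:=k_\mathfrak{S}\circ\iota$, viewed as a section of the line bundle $\mathcal{O}_{\widehat{\mathfrak{R}}}(\iota^{-1}(C_0)+\pi^{-1}(K_\mathfrak{R}))$ away from $\iota^{-1}(C_0)$, has a well-defined principal part along $\pi^{-1}(K_\mathfrak{R})$, and this principal part \emph{is} (up to sign and the pullback operation) a representative of $\pi^\ast(\delta\rho)=\pi^\ast([\alpha])$ in the description of Corollary~\ref{corotan2}; this is exactly the content of the defining diagram in the proof of Theorem~\ref{thhi1}, where the splitting $s'$ of the pushed-forward extension on $\widehat{\mathfrak{R}}$ encodes the affine chart in which $k$ becomes pole-free along $\pi^{-1}(K_\mathfrak{R})$. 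So the Laurent tail of $k$ along $\pi^{-1}(K_\mathfrak{R})$ equals the Laurent tail of $\pi^\ast(\lambda_\rho)$ up to an exact (holomorphic) correction, by the very choice of $\rho$ with $\delta\rho=[\alpha]$ (note $\delta$ is injective here since $\HH^0(\mathfrak{R},\mathcal{O}_\mathfrak{R}(K_\mathfrak{R}))$ surjects as in Lemma~\ref{hitth1}, so $\rho$ is determined modulo $\HH^0(\mathcal{O}_\mathfrak{R}(K_\mathfrak{R}))$-adjustments, which only change $\lambda_\rho$ by a global section and hence do not affect the pole cancellation).

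Then I would form $k+\pi^\ast(\lambda_\rho)$ and observe that, by the matching of Laurent tails along $\pi^{-1}(K_\mathfrak{R})$ just described, the poles of $k$ along $\pi^{-1}(K_\mathfrak{R})$ are cancelled; the only poles that survive are those of $k$ along $\iota^{-1}(C_0)$, which by Definition~\ref{hitandef1} satisfies $\iota^{-1}(C_0)\subseteq\pi^{-1}(K_\mathfrak{R})$. Here one must check that $\pi^\ast(\lambda_\rho)$ does not itself introduce new poles outside $\pi^{-1}(K_\mathfrak{R})$ — but $\lambda_\rho$ is a local function supported on $K_\mathfrak{R}$, so $\pi^\ast(\lambda_\rho)$ has poles only on $\pi^{-1}(K_\mathfrak{R})$, and $k$ has no poles outside $\iota^{-1}(C_0+K_\mathfrak{R}f)$; intersecting these constraints leaves the pole locus of the sum inside $\iota^{-1}(C_0)$. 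One should also verify that the cancellation is exact and not merely a reduction of order: this follows because the residue section $\rho$ was chosen so that $\delta\rho$ equals $[\alpha]$ \emph{on the nose} as cohomology classes, and the cocycle $(g_{ij}-1)z_i$ extracted from $k$ via the transition functions of $\mathfrak{S}$ represents precisely that class.

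The main obstacle I expect is the bookkeeping in the middle paragraph: one has to be careful about which affine coordinate on the fibre of $\mathfrak{S}$ is being used, since $k_\mathfrak{S}$ is only a $\mathbf{PGL}(2,\mathbb{C})$-valued object and its ``pole along $C_0$'' versus ``pole along $C_0+K_\mathfrak{R}f$'' distinction depends on trivializing $\WW$ near each $q_i$. The cleanest way around this is to work on $\mathfrak{S}'=\mathbb{P}(\K_\mathfrak{R}\oplus\mathbb{C})$ where the analogous function (call it $k_{\mathfrak{S}'}$, with pole divisor $C_0'$) is unambiguous, and to track how $\elm$ modifies it fibre-by-fibre over $K_\mathfrak{R}$ using the explicit matrix $\begin{pmatrix}1&1\\0&1/z\end{pmatrix}$ appearing in the proof of Theorem~\ref{ele1}; the discrepancy $k_\mathfrak{S}\circ\iota - i^\ast(k_{\mathfrak{S}'})\circ\iota$ is then visibly a local function at $\pi^{-1}(K_\mathfrak{R})$ whose residue section represents $\pi^\ast([\alpha])$, and choosing $\lambda_\rho$ to match it completes the argument.
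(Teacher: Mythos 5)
Your proposal is correct and follows essentially the same route as the paper: take $k=k_\mathfrak{S}\circ\iota$ with $k_\mathfrak{S}=T\circ\bigl(\elm_{p_1}\circ\dots\circ\elm_{p'_{2g-2}}\bigr)^{-1}$, note its pole divisor is (the pullback of) $C_0+K_\mathfrak{R}f$, and cancel the fibre poles over $K_\mathfrak{R}$ by adding $\pi^\ast(\lambda_\rho)$, leaving poles only on $\iota^{-1}(C_0)$. Your middle paragraphs merely spell out, via the cocycle $(g_{ij}-1)z_i$ representing $[\alpha]$ and the choice $\delta\rho=[\alpha]$, the cancellation that the paper asserts in one line, so no further changes are needed.
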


\begin{proof}
We know that there is a meromorphic function $T:\mathfrak{R}\times\mathbb{P}^1\to\mathbb{P}^1$ which is a natural projection. Then we may let
$$k_\mathfrak{S}=T\circ \Big(\elm_{p_1}\circ\dots\circ \elm_{p_{2g-2}}\circ\elm_{p'_1}\circ\dots\circ \elm_{p'_{2g-2}}\Big)^{-1}.$$
Consider a Hitchin tangential function 
$$k=k_\mathfrak{S}\circ \iota\text{ where }\iota:\widehat{\mathfrak{R}}\to \mathfrak{S}.$$
The poles of $k_\mathfrak{S}$ are $C_0+K_\mathfrak{R}f$. Hence the poles of $k_\mathfrak{S}+\pi^{\ast}_{\mathfrak{S}}(\lambda_\rho)$ is holomorphic outside of $C_0$ where 
$\pi_\mathfrak{S}:\mathfrak{S}\to\mathfrak{R}$. Consequently,
$$\iota^\ast\Big(k_\mathfrak{S}+\pi^{\ast}_{\mathfrak{S}}(\lambda_\rho)\Big)=k+\pi^{\ast}(\lambda_\rho)$$
is holomorphic outside of $C_0$.
\end{proof}

In the theory of the K-P equation, the \textit{Its-Mateev formula} implies that
$$u(x,y,t)=2\frac{\partial^2}{\partial x^2}\ln \theta(\mathbf{U}x+\mathbf{V}y+\mathbf{W}t+z_0)+\text{constant}$$
and the tangent vector of the first K-P flow is $\mathbf{U}$, which is nothing but $\dAb(\TT_p)$ where $\TT_p$ is a tangent vector at $p$ of a Riemann surface $\widehat{\mathfrak{R}}$ (see \emph{p.287} in \cite{kr80}). In \cite{tv90}, $\widehat{\mathfrak{R}}$ is called a {\em tangential cover} of an elliptic curve $X$ if $\dAb(\TT_p)=\pi^\ast(\dAb(\TT_q))$ where $\TT_q$ is a tangent vector at $q$ and $\pi(p)=q$ and Treibich and Verdier proved that any elliptic soliton is a tangential cover.
Note that $\delta(1_{\mathbb{C}_p})=\dAb(\TT_p)=\mathbf{U} \in\delta\Big(\HH^0(\widehat{\mathfrak{R}},\mathbb{C}_{p})\Big)\subset \HH^1(\widehat{\mathfrak{R}},\mathcal{O}_{\widehat{\mathfrak{R}}})$ and since $\HH^1(X,\mathcal{O}_X)$ is $1$-dimensional, in this case $[\alpha]$ is realized as the tangent vector $\TT_q$, i.e., $\dAb(\TT_q)=[\alpha]$ where $[\alpha]\in\HH^1(X,\mathcal{O}_X)\cong\mathbb{C}^1$ is the canonical extension class
$$0\to\mathcal{O}_X\to\mathcal{W}\to\mathcal{O}_X\to0.$$
Consequently, what the tangency condition for an elliptic curve case implies is that the tangency condition of the K-P flow is realized by the lifting $\pi^\ast([\alpha])$ of some ruled surface corresponding to $[\alpha]$: Algebraically, we may write this as
\begin{equation}\label{ellcotan}
\delta(1_{\mathbb{C}_p})=\pi^\ast([\alpha]).
\end{equation}
Using previous preliminaries, let us propose a generalization of a tangential cover in a Hitchin system as Corollary~\ref{corotan2}:
$$\delta\rho_\pi=\pi^\ast([\alpha]).$$

\begin{definition}\label{deftan2}
Let $\pi:\widehat{\mathfrak{R}}\to \mathfrak{R}$ be a Hitchin spectral cover. A {\em Hitchin tangency condition} is defined to be
\begin{equation}\label{hitcolax}
\delta(\rho_\pi)=\pi^\ast([\alpha]).
\end{equation}
\end{definition}

Note that for an elliptic soliton case, Equation~\eqref{hitcolax} is the same as
\begin{equation}\label{ell1}
\delta(1_{\mathbb{C}_p})=\pi^\ast\Big(\delta_q(1_{\mathbb{C}_q})\Big)
\end{equation}
where $\delta_q:\HH^0(X,\mathbb{C}_{q})\to\HH^1(X,\mathcal{O}_X)$ is induced from a short exact sequence on $X$,
$$0\to\mathcal{O}_X\to\mathcal{O}_X(q)\to\mathbb{C}_q\to0.$$
From Corollary~\ref{coclas} for a pointed Hitchin spectral cover $\pi:(\widehat{\mathfrak{R}},p)\to (\mathfrak{R},q)$, Equation~\eqref{hitcolax} is not equivalent to $\delta(\rho_\pi)=\pi^\ast\Big(\delta_q(1_{\mathbb{C}_q})\Big)$ but to
\begin{equation}\label{ell2}
\delta(\rho_\pi)=\pi^\ast\Big(\delta_{K_\mathfrak{R}}([\nu])+c\delta_{q}(1_{\mathbb{C}_q})\Big)
\end{equation}
for some $[\nu]\in\HH^0(\mathbb{C}_{K_\mathfrak{R}})$, since $[\alpha]=\delta_{K_\mathfrak{R}}([\nu])+c\delta_{q}(1_{\mathbb{C}_q})$. In particular, Corollary~\ref{corotan1} implies $c=0$. Hence, we may see that  Equation~\eqref{ell2} is an opposite generalization of Equation~\eqref{ell1}. On the other hands, Equation~\eqref{ell2} can be also seen as a generalization of Equation~\eqref{ell1}, since the canonical bundle $\K_X$ is trivial for an elliptic curve $X$. Hence, upon theses considerations it seems plausible to propose Definition~\ref{deftan2} as a generalization of the tangency condition of a Hitchin system.

\begin{remark}
In the Krichever theory \cite{kri77}, a Riemann surface $\widehat{\mathfrak{R}}$ with a divisor $D$ appears as a solution of the K-P equation. That is, the theta function associated with $\widehat{\mathfrak{R}}$ and the dynamics of divisor $D_t$ on $\widehat{\mathfrak{R}}$ give a solution of the K-P equation. Hence, a priori, there is no geometry of coverings involved. One method to define a particular subspace of space of all the K-P solutions is to use the reduction theory which we mention in Section~\ref{intropre}. When a theta function is reduced to theta functions of lower genera, the Riemann surface $\widehat{\mathfrak{R}}$ with a divisor $D$ appears as a cover of some Riemann surface, $\pi:\widehat{\mathfrak{R}}\to \mathfrak{R}$. The set of such coverings naturally defines a subspace of the space of all K-P solutions. Note that the configuration of the divisor $D$ on $\widehat{\mathfrak{R}}$ also plays a role in characterizing a solution in this case. For example, from \cite{kr80} we know that $\pi:\widehat{\mathfrak{R}}\to \mathfrak{R}$ with $D=\sum_{i=1}^{d}p_i$ is a matrix elliptic soliton if and only if all the $p_i$ for $i=1,\dots,d$ lie over one fiber, i.e., $p_i\in\pi^{-1}(q)$ for $i=1,\dots,d$. The $D$-tangency condition \cite{trei97} is an equivalent way to describe this method.

On the other hands, in the Hitchin theory \cite{hit87}, Hitchin spectral curves $\widehat{\mathfrak{R}}$ naturally appear as coverings of a Riemann surface $\mathfrak{R}$. One way to generalize the Krichever theory to this case is to use the generalized tangency condition of Definition~\ref{deftan2}. To get a subspace of the space of all Hitchin curves, what we proposed in Definition~\ref{deftan2} is, loosely speaking, equivalent to constructing a particular algebraic surface where a particular family of Hitchin spectral curves can be mapped into the surface. 
\end{remark}

Consider a short exact sequence
\begin{equation}\label{exsk1}
\xymatrix{
0\ar[r]&\mathcal{O}_{\widehat{\mathfrak{R}}}\ar[r]&\mathcal{O}_{\widehat{\mathfrak{R}}}\otimes\pi^\ast \mathcal{K}_\mathfrak{R}\ar[r]&\mathbb{C}_{\pi^{-1}(K_\mathfrak{R})}\ar[r]&0}
\end{equation}
and the induced long exact sequence:
\begin{equation}\label{lexsk1}
\xymatrix@R=1pt{
\cdots\ar[r]&\HH^0(\widehat{\mathfrak{R}},\pi^\ast\mathcal{K}_\mathfrak{R})\ar[r]&\HH^0(\widehat{\mathfrak{R}},\mathbb{C}_{\pi^{-1}(K_\mathfrak{R})})\ar[r]^(.55){\delta}&\HH^1(\widehat{\mathfrak{R}},\mathcal{O}_{\widehat{\mathfrak{R}}})\ar[r]&\cdots}
\end{equation}
From Corollary~\ref{corotan1}, we may let $\delta(\rho(\lambda))=[\alpha]$ for some local function $\lambda_\rho$ with the Laurent tail defined on the support of $K_\mathfrak{R}$. From Hitchin tangency condition~\eqref{hitcolax}, we have the zero class
\begin{equation}\label{fieq1}
[\delta\rho_\pi-\pi^\ast\Big(\delta(\rho(\lambda))\Big)]=[0]\in\HH^1(\widehat{\mathfrak{R}},\mathcal{O}_{\widehat{\mathfrak{R}}}).
\end{equation}
Hence, from long exact sequence~\eqref{lexsk1}, we may find a $w\in\HH^0(\widehat{\mathfrak{R}},\pi^\ast\mathcal{K}_\mathfrak{R})$ corresponding the zero class. Consequently, by construction the Laurent tail of $w+\pi^\ast(\lambda_\rho)$ at $\pi^{-1}(K_\mathfrak{R})$ is the residue section
$$\rho(w+\pi^\ast(\lambda_\rho))=\rho_\pi\in\HH^0(\widehat{\mathfrak{R}},\mathbb{C}_{\pi^{-1}(nK)}).$$
Let $(U_i,z_i)$ be a neighborhood of a point $q_i\in K_\mathfrak{R}=\sum_{i=1}^{2g-2}q_i$ with coordinate $z_i$. The defining equation of $\widehat{\mathfrak{R}}$ around $q_i$ is given by
$$0=R(k,z_i)=\prod_{j=1}^{n}(k+\frac{c_{i,j}}{z_i}+h_j(z_i))\text{ where }$$
$k$ is a function on $\widehat{\mathfrak{R}}$ given  by Theorem~\ref{thtan1} and $h_j(z_i)$ is a holomorphic function. Note that the residue section $\rho(k+\pi^\ast(\lambda_\rho))$ at $K_\mathfrak{R}=\sum_{i=1}^{2g-2}q_i$ is 
$$\big((c_{1,1}-\lambda_{1,1},\dots,c_{1,n}-\lambda_{1,n}),\dots,(c_{2g-2,1}-\lambda_{2g-2,1},\dots,c_{2g-2,n}-\lambda_{2g-2,n})\big).$$
Here
$$\big((\lambda_{1,1},\dots,\lambda_{1,n}),\dots,(\lambda_{2g-2,1},\dots,\lambda_{2g-2,n})\big).$$
is the residue section $\rho(\lambda_\rho)$ of $\lambda_\rho$. In fact, from Equation~\eqref{fieq1} it is not hard to see that
$$\rho(k+\pi^\ast(\lambda_\rho))=\rho_\pi.$$
Consequently, we see that $w=k$. Hence we have prove the following;

\begin{theorem}
Any Hitchin spectral cover satisfying tangency condition~\eqref{hitcolax} is a Hitchin tangential cover in Definition~\ref{hitandef1}. 
\end{theorem}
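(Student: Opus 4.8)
The plan is to reverse the implication already implicit in the discussion preceding the statement: we are given a Hitchin spectral cover $\pi:\widehat{\mathfrak{R}}\to\mathfrak{R}$ satisfying the tangency condition $\delta(\rho_\pi)=\pi^\ast([\alpha])$, and we must manufacture a morphism $\iota:\widehat{\mathfrak{R}}\to\mathfrak{S}=\mathbb{P}(\WW)$ with $\iota^{-1}(C_0)\subseteq\pi^{-1}(K_\mathfrak{R})$, which is exactly Definition~\ref{hitandef1}. First I would invoke the elementary characterization recorded in the proof of Theorem~\ref{thhi1} (from p.162 of \cite{har77}): such a morphism $\iota$ exists if and only if there is a surjective homomorphism $c:\pi^\ast\mathcal{W}\to\mathcal{O}_{\widehat{\mathfrak{R}}}(\pi^{-1}(K_\mathfrak{R}))$ fitting into the commutative ladder displayed there, equivalently a splitting lift $s'$ making the bottom extension $0\to\mathcal{O}_{\widehat{\mathfrak{R}}}(\pi^{-1}(K_\mathfrak{R}))\to\mathcal{W}'\to\mathcal{O}_{\widehat{\mathfrak{R}}}\to 0$ compatible with $\pi^\ast$ of $0\to\mathcal{O}_\mathfrak{R}\to\WW\to\mathcal{O}_\mathfrak{R}\to 0$. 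So the entire content is: does the tangency condition force $s^\ast(\pi^\ast[\alpha])=0$ in $\HH^1(\widehat{\mathfrak{R}},\mathcal{O}_{\widehat{\mathfrak{R}}}(\pi^{-1}(K_\mathfrak{R})))$?

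The second step is to run the cohomology bookkeeping that has already been set up. From the short exact sequence~\eqref{exsk1} and its long exact sequence~\eqref{lexsk1}, the connecting map $\delta:\HH^0(\widehat{\mathfrak{R}},\mathbb{C}_{\pi^{-1}(K_\mathfrak{R})})\to\HH^1(\widehat{\mathfrak{R}},\mathcal{O}_{\widehat{\mathfrak{R}}})$ has the property that $\ker(s^\ast)$ on $\HH^1(\mathcal{O}_{\widehat{\mathfrak{R}}})\to\HH^1(\mathcal{O}_{\widehat{\mathfrak{R}}}(\pi^{-1}(K_\mathfrak{R})))$ is precisely the image of $\delta$ — indeed $s$ here is the inclusion $\mathcal{O}_{\widehat{\mathfrak{R}}}\hookrightarrow\mathcal{O}_{\widehat{\mathfrak{R}}}\otimes\pi^\ast\mathcal{K}_\mathfrak{R}$. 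The tangency hypothesis says $\pi^\ast([\alpha])=\delta(\rho_\pi)$ lies in $\operatorname{im}(\delta)$, hence $s^\ast(\pi^\ast[\alpha])=0$. This is exactly the vanishing needed to produce the surjection $c$, and therefore the morphism $\iota$. Concretely, $\rho_\pi$ is the residue section of the meromorphic function $k+\pi^\ast(\lambda_\rho)$ (with $w=k$, as computed just above the statement), and $\iota$ is recovered as the map whose composite with $k_\mathfrak{S}$ is this $k$: the zero divisor of $P_{\iota(\widehat{\mathfrak{R}})}(k_\mathfrak{S},z)$ meets $C_0$ only along $\sigma(K_\mathfrak{R})$, giving $\iota^{-1}(C_0)\subseteq\pi^{-1}(K_\mathfrak{R})$.

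The main obstacle — and the step requiring actual care rather than formal diagram-chasing — is to verify that the surjectivity of $c$ (not merely the existence of some lift) is genuinely equivalent to the subscheme inclusion $\iota^{-1}(C_0)\subseteq\pi^{-1}(K_\mathfrak{R})$ rather than just set-theoretic containment, and to match the divisor $\pi^{-1}(K_\mathfrak{R})$ appearing in the bottom row of the ladder with the actual scheme structure of $\iota^{-1}(C_0)$ cut out by $P_{\iota(\widehat{\mathfrak{R}})}$. One must check that the cokernel of $\pi^\ast\mathcal{W}\to\mathcal{W}'$ is supported on (a subscheme of) $\pi^{-1}(K_\mathfrak{R})$ with the right multiplicities, which is where the explicit form of the residue section $\rho(k+\pi^\ast(\lambda_\rho))=\rho_\pi\in\HH^0(\widehat{\mathfrak{R}},\mathbb{C}_{\pi^{-1}(K_\mathfrak{R})})$ and the local defining equation $R(k,z_i)=\prod_{j=1}^n(k+\tfrac{c_{i,j}}{z_i}+h_j(z_i))$ do the work: the pole orders of $k+\pi^\ast(\lambda_\rho)$ along each $q_i$ are bounded by $1$ once the $\lambda_\rho$ tail is subtracted, which is exactly the statement that the lift lands in $\mathcal{O}_{\widehat{\mathfrak{R}}}(\pi^{-1}(K_\mathfrak{R}))$ and not a larger twist. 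I would therefore close the argument by remarking that $w=k$ forces the residue section to be $\rho_\pi$ on the nose, so the homomorphism $c$ constructed from $\pi^\ast[\alpha]=\delta(\rho_\pi)$ is surjective onto $\mathcal{O}_{\widehat{\mathfrak{R}}}(\pi^{-1}(K_\mathfrak{R}))$, and by the cited criterion this is precisely a Hitchin tangential cover in the sense of Definition~\ref{hitandef1}.
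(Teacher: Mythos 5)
Your proposal is correct and follows essentially the paper's own route: both arguments rest on the long exact sequence~\eqref{lexsk1}, observing that the tangency condition~\eqref{hitcolax} places $\pi^\ast([\alpha])$ in the image of $\delta$, hence kills its image in $\HH^1(\widehat{\mathfrak{R}},\mathcal{O}_{\widehat{\mathfrak{R}}}(\pi^{-1}(K_\mathfrak{R})))$, and then match the local residue data along $\pi^{-1}(K_\mathfrak{R})$ to obtain the map $\iota$ with $\iota^{-1}(C_0)\subseteq\pi^{-1}(K_\mathfrak{R})$. The only real difference is presentational: the paper extracts the global section $w\in\HH^0(\widehat{\mathfrak{R}},\pi^\ast\mathcal{K}_\mathfrak{R})$ and identifies it with the tangential function $k$ via Laurent tails, whereas you run the surjection/splitting criterion from the proof of Theorem~\ref{thhi1} backwards, which packages the same vanishing more directly (and sidesteps the paper's appeal to Theorem~\ref{thtan1} for the existence of $k$), while treating the surjectivity and scheme-theoretic containment issues at the same level of detail as the paper itself.
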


This shows that we may find an explicit local data of singularities of Hitchin tangential curves from the generalized tangency condition. For the elliptic soliton case, there are explicit defining equations of elliptic solitons and matrix elliptic solitons available to characterize singularities. See \cite{kr80, trei97} for details. It will be also interesting to see how theses singularities look like for the case of sub-linear systems of Hitchin spectral curves. That is, an investigation about the characteristic of singularities of Hitchin spectral curves associated with vector bundles {\cite{hit87} with gauge groups, for example, $\mathbf{SO}(2m,\mathbb{C}),\mathbf{SP}(m,\mathbb{C}), \mathbf{SO}(2m+1,\mathbb{C})$, and $\mathbf{G}_2$, etc, would be an interesting problem. Therefore, we will revisit this investigation and deal with theses cases for somewhere else in the future.

Another project to which we want to draw the attention of the audience is the finiteness of Hitchin hyperelliptic tangential covers over a hyperelliptic curve. The main and remarkable application of the Treibich-Verdier theory was to provide the proof of the finiteness of ``hyperelliptic tangential covers'' over a fixed elliptic curve (see \emph{p.462} in \cite{tv90}). 
In the sense of tangency condition in this paper, we can also formulate a similar statement: In our setting, the base curve $\mathfrak{R}$ should be a hyperelliptic curve and we should consider the moduli of Hitchin hyperelliptic tangential covers over a hyperelliptic curve $\mathfrak{R}$. The finiteness of the space can be obtained by mimicking similar procedures in \cite{tv90}. However, the main difficulty to follow the procedures in \cite{tv90} comes from the way of constructing the ruled surface $\mathfrak{S}$. Note that the method to construct a ruled surface in \cite{tv90} is different from ours. Hence, it is not possible to follow them directly to prove the desired result in our setting. In order to remove this drawback, we should provide a different way to construct the surface $\mathfrak{S}$ to make a parallel approach as in \cite{tv90}. We will give the construction and the proof elsewhere \cite{kim111}.

\end{document}